\newtheorem{theorem}{Theorem}[section]
\newtheorem{lemma}[theorem]{Lemma}
\newtheorem{proposition}[theorem]{Proposition}
\newtheorem{definition}[theorem]{Definition}
\newtheorem{corollary}[theorem]{Corollary}
\newtheorem{example}[theorem]{Example}
\newtheorem{system}[theorem]{System}
\numberwithin{equation}{section}
\newtheorem*{remark}{Remark}
\newcommand{\rev}[1]{{#1}}
\title{Stabilizability and lower spectral radius for linear switched systems with singular matrices}
\author{Carl P. Dettmann and Chenmiao Zhang}
\begin{document}
\maketitle
\begin{abstract}
    We investigate the stabilizability of linear discrete-time switched systems with singular matrices, focusing on the spectral radius in this context. 
    A new lower bound of the stabilizability radius is proposed, which is applicable to any matrix set. \rev{Switched systems with rank one singular matrices are discussed: The stabilizability radius and the joint spectral subradius are equal for such systems.
    Detailed analysis of the stabilizability radius of two-dimensional switched systems, consisting of a singular matrix and a matrix with complex eigenvalues or real eigenvalues, are presented. The condition when an infinitely long aperiodic optimal sequence appears of such system is also discussed. 
    Other properties of switched systems with singular matrices are also discussed along with examples.}
\end{abstract}

\section{Introduction}

Switched systems have been widely studied in both theoretical and applied contexts. Comprehensive descriptions of the formation and stability of such systems can be found in \cite{sun2006switched} and \cite{sun2011stability}.
Ref.~\cite{chitour2025switched} contains more recent works on continuous time switched linear systems. From a control and analytical perspective, Ref.~\cite{patel2025globally} consider the control of switched linear systems, the Lyapunov analysis of switched systems considered in \cite{della2024multiple} and \cite{della2022continuous}.
Beyond purely theoretical inquiries, switched systems have also proven effective in modeling real-world phenomena, such as the epidemiological spread framework proposed by \cite{bliman2025framework} and designing of multiphase trajectories for aerospace vehicles in \cite{saranathan2018relaxed}.

Despite their structurally simple form, determining the stability and stabilizability of these systems remains highly challenging, even in the linear case.
The discrete-time linear switched system of the given matrix set $\mathcal{M} \subseteq \mathbb{R}^{n \times n}, \; n \geq 2$ is given by
\begin{equation}\label{eq5}
    x(t+1) = M_{\sigma(t)}x(t), \quad x(0)=x_0,
\end{equation}
where $M_{\sigma(t)} \in \mathcal{M}$, $\sigma(t) \in \{1, \dots, m\}$ and $m$ is the cardinality of $\mathcal{M}$. \rev{Notice that $m$ may be infinite. However, by Corollary $3.4$ and Theorem $3.5$ in \cite{stanford1994some}, it is sufficient to only consider finite $m$ in this paper.}
There are many general results on the stability and stabilizability of such system (for example, see \cite{sun2011stability}). However, we are not aware of any study focusing on the case where the matrix set $\mathcal{M}$ contains singular matrices.
In the literature, the term ``singular switched system'' refers to a special kind of switched system with many known results (see \cite{zhou2013stability} and \cite{lewis1986survey}), which is different from the one we consider in this paper.  

To describe the stability and stabilizability of system (\ref{eq5}), the joint spectral properties are widely used. 
Let $\mathcal{M}^T$ denote the set that contains all $M_{\sigma(t)} \cdots M_{\sigma(1)}$. Then the joint spectral radius of system (\ref{eq5}) is defined as
\begin{equation}
    \hat{\rho}(\mathcal{M}) = \lim_{T \rightarrow \infty} \sup_{A \in \mathcal{M}^T} \|A\|^{1/T},
\end{equation}
for any norm. Then the joint spectral subradius is defined as
\begin{equation}
    \check{\rho}(\mathcal{M}) = \lim_{T \rightarrow \infty} \inf_{A \in \mathcal{M}^T} \|A\|^{1/T},
\end{equation}
for any norm.
A new radius proposed in \cite{jungers2017feedback}, called the stabilizability radius, is defined as follows. 
\begin{equation}\label{eq13}
    \tilde{\rho} = \sup_{x_0 \in \mathbb{R}^n} \inf \{ \lambda \geq 0 \, | \, \exists \sigma(\cdot), c > 0 \; s.t. \; \|x(t)\| \leq c \lambda^t \|x_0\| \, \forall t \geq 0\},
\end{equation}
These three radii trivially satisfy $\tilde{\rho}\leq\check{\rho}\leq\hat{\rho}$ and describe different properties of the switched system with different terminology in the literature. In this paper, we have
\begin{itemize}
    \item A switched system is called asymptotically stable if and only if $\hat{\rho}(\mathcal{M}) < 1$ (see, for example, \cite{guglielmi2013stability}).
    \item A switched system is called uniformly stabilizable or open loop stabilizable in the literature if $\check{\rho}(\mathcal{M}) < 1$ (see, for example, \cite{jungers2009joint}).
    \item A switched system is called pointwise stabilizable if $\tilde{\rho}(\mathcal{M}) < 1$ (proposition 2.5 in \cite{jungers2017feedback}).
\end{itemize}
Stanford first showed that pointwise stabilizability is different from uniform stabilizability in \cite{stanford1979stability}.

The stabilizability radius $\tilde{\rho}$, first introduced in \cite{jungers2017feedback}, had many implicit studies beforehand, for example, Ref.~\cite{lee2010supervisory} considered this problem in measurement scheduling and other cases,
and Ref.~\cite{hernandez2011discrete} considered this problem with applications in HIV drug control.
For non-singular matrix sets, $\tilde{\rho}$ has been proved to have several lower bounds in \cite{dettmann2020lower},
\rev{but it would be good to know a lower bound of $\tilde{\rho}$ for an arbitrary matrix set. Let us mention two simple examples
\begin{example}\label{ex5} 
    (Based on an example first given in \cite{stanford1979stability}) Switched system with $\mathcal{M}=\{M_1,M_2\}$ such that
\begin{equation}
    M_1=\left(\begin{matrix}
        \frac{1}{2} &0 \\
        0 &2
    \end{matrix}\right), \quad
    M_2=\left(\begin{matrix}
        \cos \frac{\pi}{6} &-\sin \frac{\pi}{6}\\
        \sin \frac{\pi}{6} & \cos \frac{\pi}{6}
    \end{matrix}\right),
\end{equation}
\end{example} 
\begin{example}\label{ex6}
    Switched system with $\mathcal{M}=\{M_1, M_2\}$ such that
\begin{equation}
    M_1=\left(\begin{matrix}
        \frac{1}{2} &0 &0 \\
        0 &2 &0 \\
        0 &0 &0
    \end{matrix}\right), \quad
    M_2=\left(\begin{matrix}
        \cos \frac{\pi}{6} & -\sin \frac{\pi}{6} & 0\\
        \sin \frac{\pi}{6} & \cos \frac{\pi}{6} & 0\\
        0 & 0 & 1
    \end{matrix}\right).
\end{equation}
\end{example}
This Example \ref{ex5} has been well studied in \cite{dettmann2020lower}; the lower bound given by Theorem $2$ in \cite{dettmann2020lower} gives us $\tilde{\rho} \geq \sqrt{2}/2$ for Example \ref{ex5}.
In Example \ref{ex6}, $M_1$ projects any three dimensional vector onto the $(x_1,x_2)$ plane, thus Example \ref{ex5} and \ref{ex6} should have the same dynamics.
However, as far as we know, none of the existing lower bound works for Example \ref{ex6}, because $M_1$ is a singular matrix. Also in applications mentioned above, the matrix set could contain singular matrices. }

\rev{This leads us to two natural questions: 
\begin{itemize}
    \item Is there a more general lower bound that remains valid in the presence of singular matrices?
    \item What is the dynamics of linear switched systems with singular matrices, and in particular, how does singularity affect their stabilizability?
\end{itemize}  
By addressing these two questions, we aim to gain a deeper understanding of linear switched systems with singular matrices.} 

To the best of our knowledge, there are no previous works focusing on switched system with singular matrices. However, a related setting, the Alternating Projection Method (APM), has many known results (see for example \cite{escalante2011alternating}). In $\mathbb{R}^n$, the setting of APM is related to switched systems with orthogonal projection matrices. A matrix $M$ is an orthogonal projection if it satisfies $M^2=M=M^T$. 

 The question whether an initial point converges to a closed subspace is often considered in the setting of APM. If we take the singleton with origin as the closed subspace, then this is related to the stabilizability problem above. Let us mention \cite{kopecka2020products} for considering the singleton with the origin as the intersection of subspaces of an infinite dimensional space. We have the following proposition to transform an important theorem in the study of APM into a stabilizability result:
\begin{proposition}\label{pro1}
    Let $M_1, \dots, M_{k},\; k \leq m$ be orthogonal projection matrices in $\mathcal{M}$. If
    \begin{equation}
        \bigcap_{i=1}^{k} \mathbf{Im}(M_i) = \{(0, \dots,0)^T\},
    \end{equation}
    then $\mathcal{M}$ is stabilizable, where $\mathbf{Im}(\cdot)$ denotes the image of matrices.
\end{proposition}
The proof of Proposition $\ref{pro1}$ comes immediately from Theorem 3.4 in \cite{escalante2011alternating}. 

In this paper, we establish a new lower bound of the stabilizability radius for any finite matrix set. 
In the meantime, we do a detailed analysis of $\tilde{\rho}$ of a two dimensional switched system with singular matrices, and show how to compute the stability radius $\tilde{\rho}$ using continued fractions and Diophantine approximation.
\rev{We also investigate the finiteness property (formally defined in Definition \ref{def1}), i.e. having finite or periodic optimal sequence, of these two dimensional cases.} Other properties of systems are mentioned as they appear.

Some notations are used in this paper with their well-known meanings. The area denotes the $n-1$ dimensional Lebesgue measure of a set in $\mathbb{R}^n$. For two functions $f$ and $g$, we write $f(x) \sim g(x)$ as $x \rightarrow 0$ if $\lim_{x \rightarrow 0} f(x)/g(x) = 1$. 
The notation $\| \cdot \|$ will now specifically denote the Euclidean norm in $\mathbb{R}^n$. \rev{The notation $\| \cdot \|_N$ from number theory denotes the distance to the closed integer}. \rev{$\mathbb{N}$ denotes natural numbers including zero}. 

In section $2$, we show the $\tilde{\rho}$ is lower bounded by $\check{\rho}$ divided by the cardinality of $\mathcal{M}$.
Section $3$ is dedicated to the switched systems with rank one singular matrices, The lower spectral radius equals stabilizability radius for such systems. Other properties of such systems are discussed as well.
\rev{In section $4$ and $5$, we discuss the stabilizability radius of the two dimensional switched system with two matrices and one of them is a singular matrix.
Section $4$ is about switched system with a singular matrix and a rotation matrix, i.e. with complex eigenvalues. We also show the parameter set, such that $\tilde{\rho} = c, 0 \leq c < 1$ and the optimal sequence is infinitely long and aperiodic, has zero Hausdorff dimension in this section.
Section $5$ is about switched system with a singular matrix and a hyperbolic matrix, i.e. with real eigenvalues. The condition when optimal sequence is infinitely long and aperiodic of these systems is also discussed.
Section $6$ contains several examples of switched systems with singular matrices about the relationship between the smallest singular value and $\tilde{\rho}$. One strategy of finding the optimal switching law is also mentioned.}

\section{Lower Bound for the stabilizability radius}
This section pertains to arbitrary sets of matrices, singular or non-singular.
Let $A_{\sigma,t}$ denote the product $M_{\sigma(t)} \cdots M_{\sigma(1)}$.
Suppose $s^2_1(A_{\sigma,t}), \dots, s^2_n(A_{\sigma,t})$ are ordered eigenvalues of $A^T_{\sigma,t}A_{\sigma,t}$ and $v_1, \dots, v_n$ are corresponding orthonormal eigenvectors, where $s_1$ is smallest singular value of $A_{\sigma,t}$. 
We have two properties of $v_i$.
\rev{\begin{lemma}
    The $v_i$ for $i=1, \dots, n$, called singular vectors, satisfies
    \begin{enumerate}
        \item $A_{\sigma,t}v_i$ are mutually orthogonal. 
        \item $\|A_{\sigma,t} v_i\| =s_i(A_{\sigma,t})$. 
    \end{enumerate}
    for $i=1, \dots, n$.
\end{lemma}
\begin{proof}
    For the first property, we have 
    \begin{equation}
    \left<A_{\sigma,t}v_i,A_{\sigma,t}v_j\right>=\left<v_i,A^T_{\sigma,t}A_{\sigma,t}v_j\right>=s^2_j \left<v_i,v_j\right>.
    \end{equation}
    Then $A_{\sigma,t}v_i$ are orthogonal because $v_i$ are orthogonal, for $i=1, \dots, n$.
    For the second property, we have
    \begin{equation}\label{eq23}
    \|A_{\sigma,t}v_i\|^2 = \left<A_{\sigma,t}v_i,A_{\sigma,t},v_i\right>=\left<v_i,A^T_{\sigma,t}A_{\sigma,t}v_i\right>=s^2_i.
    \end{equation}
    The second property follows from taking square root of both sides of \eqref{eq23}.
\end{proof}}
\rev{Let $B_r$ denote the ball with radius $r$ centered at the origin and $S^{n-1}$ denote the $n$ dimensional unit sphere.
For $x_0 \in S^{n-1}$, we have $x(t) \in B_r$ for some $r$. Now we want a cover of the intersection of the pre-image of $A_{\sigma,t}$ and $S^{n-1}$, i.e. $S_{r,A_{\sigma,t}} = \{x \in S^{n-1} | \|A_{\sigma,t} x\| \leq r\}$.}
Suppose $s_1, \dots, s_\kappa$ are zero singular values, then $v_1, \dots, v_\kappa$ form an orthonormal basis of $\mathbf{Ker}(A_{\sigma,t})$.
Let $(x_1, \dots, x_n)$ denote the coordinate of $x \in \mathbb{R}^n$ in terms of $v_1, \dots, v_n$.
\rev{We can characterize $S_{r,A_\sigma}$ as follows:
\begin{lemma}\label{lm10}
    For any nonzero $A_{\sigma,t}$, let $S' = \{x \in S^{n-1} | |x_n| \leq r/s_n(A_{\sigma,t}) \}$, then $S_{r,A_{\sigma,t}} \subseteq S'$
\end{lemma}
\begin{proof}
    Since $A_{\sigma,t} v_j = 0$ for $j=1, \dots,\kappa$, for any $x \in S_{r,A_{\sigma,t}}$, we have
    \begin{equation}
        \|A_{\sigma,t} x\|^2 = \|\sum_{i=1}^{n} x_i A_{\sigma,t} v_j \|^2 = \sum_{i=\kappa+1}^{n} s^2_i(A_{\sigma,t}) x_i^2 \leq r^2.
    \end{equation}
    Notice that
    \begin{equation}
        \sum_{i=\kappa+1}^{n} s^2_i(A_{\sigma,t}) x_i^2 \geq s^2_n(A_{\sigma,t}) x_n^2.
    \end{equation}
    We have $S_{A_{\sigma,t},r} \subseteq S'$.
\end{proof}}
This $S'$ in Lemma \ref{lm10} is a spherical segment.  
The area of $S'$ can be obtained using the area of spherical caps.
A formula for area of caps is given by Lemma \ref{lm4}.
\begin{lemma}\label{lm4}
    ((1) in \cite{li2010concise})The surface area of the cap of the unit sphere $S^{n-1}$, i.e. $T_n(h)=\{x \in S^{n-1}|x_1 \geq h\}$ with $x_1$ be one of coordinates of $x$ \rev{and $h>0$}, is 
    \begin{equation}
        |T_n(h)| = \frac{1}{2} |S^{n-1}| I\left(1-h^2;\frac{n-1}{2},\frac{1}{2}\right),
    \end{equation}
    where $I(h;\cdot, \cdot)$ denotes the regularized incomplete beta function, defined as
    \begin{equation}
        I(h;a,b) = \frac{\int_0^{h} t^{a-1}(1-t)^{b-1}\text{d}t}{\int_0^1 t^{a-1}(1-t)^{b-1}\text{d}t}, \quad 0 < h < 1.
    \end{equation}
\end{lemma}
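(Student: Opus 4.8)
The plan is to reduce the surface area of the cap to a one–dimensional integral by slicing $S^{n-1}$ with the family of hyperplanes $\{x_1 = t\}$, and then to recognize that integral as an incomplete beta function. First I would parametrize a neighbourhood of the cap by $(t,\omega)\mapsto(t,\sqrt{1-t^2}\,\omega)$ with $t=x_1\in[h,1)$ and $\omega\in S^{n-2}$, so that the slice at height $t$ is an $(n-2)$-sphere of radius $\sqrt{1-t^2}$. A short computation of the first fundamental form shows that the $\partial_t$-direction is orthogonal to the slice directions and has length $(1-t^2)^{-1/2}$, while the slice itself contributes its own surface measure scaled by $(1-t^2)^{(n-2)/2}$. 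Hence the surface-area element of $S^{n-1}$ in these coordinates is $(1-t^2)^{(n-3)/2}\,dt\,d\omega$, and integrating $\omega$ over $S^{n-2}$ gives
\[
|T_n(h)| = |S^{n-2}|\int_h^1 (1-t^2)^{(n-3)/2}\,dt .
\]

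Next, since $0\le h<1$, the substitution $u=1-t^2$ is monotone on $[h,1]$ and turns the integral into $\tfrac12\int_0^{1-h^2} u^{(n-1)/2-1}(1-u)^{1/2-1}\,du$, which is exactly $\tfrac12 B\!\left(\tfrac{n-1}{2},\tfrac12\right) I\!\left(1-h^2;\tfrac{n-1}{2},\tfrac12\right)$ by the definition of $I$ given in the statement. It then remains to identify the constant $|S^{n-2}|\,B\!\left(\tfrac{n-1}{2},\tfrac12\right)$ with $|S^{n-1}|$. I would do this either directly, from $|S^{k-1}|=2\pi^{k/2}/\Gamma(k/2)$, $B(a,b)=\Gamma(a)\Gamma(b)/\Gamma(a+b)$ and $\Gamma(\tfrac12)=\sqrt\pi$, or more slickly by evaluating the slicing formula at $h=0$: the left side is then the area $\tfrac12|S^{n-1}|$ of a hemisphere and the right side is $\tfrac12|S^{n-2}|\,B\!\left(\tfrac{n-1}{2},\tfrac12\right)$ because $I(1;\cdot,\cdot)=1$. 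Substituting this constant back yields the claimed identity $|T_n(h)|=\tfrac12|S^{n-1}|\,I\!\left(1-h^2;\tfrac{n-1}{2},\tfrac12\right)$.

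There is little that can go wrong here, so the \emph{main obstacle} is essentially bookkeeping: getting the metric distortion factor $(1-t^2)^{(n-3)/2}$ right (equivalently, that a uniform point of $S^{n-1}$ has first coordinate with density $\propto(1-t^2)^{(n-3)/2}$ on $[-1,1]$), and remembering that the substitution $u=1-t^2$ needs $h\ge 0$ to remain monotone — for $h<0$ one must split $[h,1]=[h,0]\cup[0,1]$, and indeed the stated formula holds only for $h\in[0,1)$, which is the only range in which the lemma is invoked. It is also worth sanity-checking the degenerate case $n=2$, where $S^{n-2}=S^0$ consists of two points with $|S^0|=2$ and the formula collapses to the arc length $2\arccos h$. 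An alternative, essentially equivalent, route phrases everything probabilistically: $|T_n(h)|/|S^{n-1}|=\Pr[x_1\ge h]$ for a uniform $x\in S^{n-1}$, and since $x_1^2$ has a $\mathrm{Beta}(\tfrac12,\tfrac{n-1}{2})$ distribution one gets $\Pr[x_1\ge h]=\tfrac12\Pr[x_1^2\ge h^2]=\tfrac12 I\!\left(1-h^2;\tfrac{n-1}{2},\tfrac12\right)$ using the symmetry $1-I(x;a,b)=I(1-x;b,a)$.
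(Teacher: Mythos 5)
Your proof is correct. Note that the paper does not prove this lemma at all --- it is quoted verbatim from the cited reference \cite{li2010concise} --- so there is no in-paper argument to compare against; your slicing derivation is the standard one and it is complete. The computation of the area element $(1-t^2)^{(n-3)/2}\,dt\,d\omega$, the substitution $u=1-t^2$ giving $\tfrac12\int_0^{1-h^2}u^{(n-1)/2-1}(1-u)^{-1/2}\,du$, and the identification $|S^{n-2}|\,B\bigl(\tfrac{n-1}{2},\tfrac12\bigr)=|S^{n-1}|$ all check out, as does the $n=2$ sanity check. You are also right to flag that the identity requires $h\ge 0$ (otherwise the cap exceeds a hemisphere while the right-hand side stays below $\tfrac12|S^{n-1}|$); the lemma's hypothesis $0<1-h^2<1$ and the paper's only use of it, with $h=r/s_n(A_t)\ge 0$, are consistent with this restriction.
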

A useful equality of $I(h;\cdot, \cdot)$ is given by
\begin{equation}
    I\left(h;\frac{n-1}{2},\frac{1}{2}\right) = 1-I\left(1-h;\frac{1}{2},\frac{n-1}{2}\right).
\end{equation}
One can prove this by changing variables in the integral definition of $I(h;\cdot, \cdot)$. \rev{Now by Lemmas \ref{lm10} and \ref{lm4}, we can generate a upper bound of $|S_{A_{\sigma,t},r}|$}.
\begin{lemma}
    The area of $S_{r,A_{\sigma,t}}$ is upper bounded by 
    \begin{equation}
        |S_{r,A_{\sigma,t}}| \leq |S^{n-1}|I\left(\frac{r^2}{s^2_n\left(A_{\sigma,t}\right)}; \frac{1}{2}, \frac{n-1}{2}\right).
    \end{equation}
    For any $t \in \mathbb{N}^+$.
\end{lemma}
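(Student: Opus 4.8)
The plan is to reduce the claim to an \emph{exact} evaluation of the area of the enclosing spherical segment. Recall from the paragraph preceding the statement that $S_{r,A_t}\subseteq S'$, where $S'=\{x\in S^{n-1}:|x_n|\le r/s_n(A_t)\}$; here $s_n(A_t)>0$ since $A_t$ is nonzero, so $h:=r/s_n(A_t)$ is well defined. It therefore suffices to show $|S'|=|S^{n-1}|\,I\!\left(h^2;\tfrac12,\tfrac{n-1}{2}\right)$. I would first dispose of the degenerate case $h\ge 1$: then $S'=S^{n-1}$, and with the standard convention that the regularized incomplete beta function equals $1$ once its first argument reaches $1$, the asserted bound is just $|S_{r,A_t}|\le|S^{n-1}|$, which is trivial. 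So assume $0<h<1$ from now on.

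Next I would write $S'$ as the complement in $S^{n-1}$ of the two caps $C_+=\{x\in S^{n-1}:x_n\ge h\}$ and $C_-=\{x\in S^{n-1}:x_n\le -h\}$. Because $h>0$ these caps are disjoint, and the reflection $x_n\mapsto -x_n$ is an isometry of $S^{n-1}$ carrying $C_+$ onto $C_-$, so $|C_+|=|C_-|=|T_n(h)|$ in the notation of Lemma~\ref{lm4} (applied with the coordinate $x_n$ playing the role of $x_1$, which is legitimate by the rotational symmetry of the sphere). Hence, using the cap formula of Lemma~\ref{lm4},
\begin{equation}
    |S'| = |S^{n-1}| - 2|T_n(h)| = |S^{n-1}|\left(1 - I\!\left(1-h^2;\tfrac{n-1}{2},\tfrac12\right)\right).
\end{equation}

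Finally I would invoke the displayed identity $I\!\left(u;\tfrac{n-1}{2},\tfrac12\right)=1-I\!\left(1-u;\tfrac12,\tfrac{n-1}{2}\right)$ with $u=1-h^2$, which rewrites the bracket as $I\!\left(h^2;\tfrac12,\tfrac{n-1}{2}\right)$; substituting $h^2=r^2/s_n^2(A_t)$ and using $|S_{r,A_t}|\le|S'|$ yields the stated bound, and the argument is uniform in $t\in\mathbb{N}^+$. I do not anticipate a real obstacle here: the only points needing care are (i) verifying that the two caps are disjoint and exchanged by the sphere's reflection symmetry, so that Lemma~\ref{lm4} applies verbatim to the $x_n$-coordinate, and (ii) the bookkeeping with the incomplete-beta identity together with the boundary convention in the case $r\ge s_n(A_t)$.
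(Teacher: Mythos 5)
Your proposal is correct and follows essentially the same route as the paper: bound $S_{r,A_t}$ by the segment $S'$, compute $|S'|=|S^{n-1}|-2|T_n(h)|$ via the cap formula of Lemma~\ref{lm4}, and convert with the stated incomplete-beta identity. The extra care you take with the degenerate case $h\ge 1$ and the disjointness of the two caps is a minor tidying of details the paper leaves implicit.
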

\begin{proof}
The area of $S'$ is given by
\begin{equation}
    |S'| = |S^{n-1}| - 2 \left|T_n\left(\frac{r}{s_n\left(A_{\sigma,t}\right)}\right)\right|.
\end{equation}
By Lemma \ref{lm10}, we have
\begin{equation}
    |S_{r,A_{\sigma,t}}| \leq |S^{n-1}|\left(1- I\left(1-\frac{r^2}{s^2_n\left(A_{\sigma,t}\right)}; \frac{n-1}{2}, \frac{1}{2}\right)\right) 
                = |S^{n-1}|I\left(\frac{r^2}{s^2_n\left(A_{\sigma,t}\right)}; \frac{1}{2}, \frac{n-1}{2}\right).
\end{equation} 
\end{proof}
\rev{This upper bound of the area of $S_{r,A_{\sigma,t}}$ leads to a lower bound of stabilizability radius, but we need one more lemma to prove it.
\begin{lemma}\label{lm11}
    For an integer $T \geq 1$, let $S(T)$ denotes the smallest $s_n(A_{\sigma,T})$ for all $\sigma(\cdot)$. We have
    \begin{equation}\label{eq9}
        \liminf_{T \rightarrow \infty} S(T)^{1/T} = \liminf_{T \rightarrow \infty} \{s_n(A_{\sigma,t})^{1/t} \;|\; \sigma(t) \in \{1,\dots,m\}, t \in\{1,\dots,T\}\},
    \end{equation}
    i.e.
    \begin{equation}
        \liminf_{T \rightarrow \infty} S(T)^{1/T} = \check{\rho}.
    \end{equation}
\end{lemma}
\begin{proof}
    Clearly, $S(T) \in \{s_n(A_{\sigma,t})^{1/t} \;|\; \sigma(t) \in \{1,\dots,m\}, t \in\{1,\dots,T\}\}$. Then the LHS of \eqref{eq9} is greater or equals to the RHS of \eqref{eq9}.
    Now Suppose the LHS of \eqref{eq9} is larger than the RHS of \eqref{eq9}, then there exist a $T'>0$ and $A_{\sigma,T'}$ such that 
    \begin{equation}
        s_n(A_{\sigma,T'}) < \liminf_{T \rightarrow \infty} S(T)^{1/T}.
    \end{equation}
    This is impossible as $S(T') \leq s_n(A_{\sigma,T'})$. 
    Then the LHS of \eqref{eq9} must equal to the RHS of \eqref{eq9} and The RHS of \eqref{eq9} is the $\check{\rho}$ of the matrix set $\mathcal{M}$ (see, for example, Theorem 1.1 in \cite{jungers2009joint})
\end{proof}}
\begin{theorem}\label{thm1}
    The stabilizability radius satisfies \rev{the following inequality}.
    \begin{equation}
        \frac{\check{\rho}}{m} \leq \tilde{\rho} \leq \check{\rho}.
    \end{equation}
\end{theorem}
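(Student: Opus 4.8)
The plan is to treat the two inequalities very asymmetrically. The right-hand bound $\tilde\rho\le\check\rho$ is the trivial one already recorded in the introduction: fixing $\epsilon>0$, choosing $T$ and a product $A\in\mathcal{M}^T$ with $\|A\|\le(\check\rho+\epsilon)^T$, and iterating the period-$T$ switching law that generates $A$ gives, from every $x_0$, a decay $\|x(t)\|\le c(\check\rho+\epsilon)^t\|x_0\|$ with $c$ depending only on $\max_{M\in\mathcal{M}}\|M\|$ and $T$; hence $\tilde\rho\le\check\rho+\epsilon$ for all $\epsilon$. All the content is in $\check\rho\le m\tilde\rho$, which I would obtain from a covering argument on the unit sphere using the bound on $|S_{r,A_t}|$ established above.

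Fix $\lambda>\tilde\rho$. The defining inequality $\|x_\sigma(t)\|\le c\lambda^t\|x_0\|$ is invariant under rescaling $x_0$, so it is enough to look at $x_0\in S^{n-1}$, and for each such $x_0$ there exist a switching sequence $\sigma^{x_0}$ and a constant $c_{x_0}>0$ with $\|x_{\sigma^{x_0}}(t)\|\le c_{x_0}\lambda^t$ for every $t$. To remove the dependence of $c_{x_0}$ on $x_0$ I would pass to a piece of the sphere on which the constant is uniform: set $E_k=\{x_0\in S^{n-1}:\text{some admissible }(\sigma,c)\text{ has }c\le k\}$. By a compactness (König's lemma) argument on the finitely branching tree of switching words, $x_0\in E_k$ as soon as, for every horizon $T$, there is a length-$T$ word keeping the orbit inside $\|x(t)\|\le k\lambda^t$ for $t\le T$; this exhibits $E_k$ as a countable intersection of finite unions of closed sets, hence closed, and the $E_k$ increase to $S^{n-1}$ since $\lambda>\tilde\rho$ is an admissible rate for every $x_0$. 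Countable additivity then gives $k_0$ with $|E_{k_0}|=\delta>0$.

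Now fix a horizon $T$ and set $r=k_0\lambda^T$. For $x_0\in E_{k_0}$, the length-$T$ prefix of $\sigma^{x_0}$ produces $B=M_{\sigma^{x_0}(T)}\cdots M_{\sigma^{x_0}(1)}\in\mathcal{M}^T$ with $\|Bx_0\|\le r$, i.e. $x_0\in S_{r,B}$, so $E_{k_0}\subseteq\bigcup_{B\in\mathcal{M}^T}S_{r,B}$. Since $|\mathcal{M}^T|\le m^T$, taking areas and invoking the preceding lemma,
\begin{equation}
    \delta\;\le\;\sum_{B\in\mathcal{M}^T}|S_{r,B}|\;\le\;m^T\,|S^{n-1}|\,\max_{B\in\mathcal{M}^T}I\!\left(\frac{r^2}{s_n^2(B)};\tfrac{1}{2},\tfrac{n-1}{2}\right),
\end{equation}
the maximum being attained at the product $B^\ast$ with smallest $s_n(B^\ast)$. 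If $s_n(B^\ast)\le r$ then already $\inf_{B\in\mathcal{M}^T}\|B\|\le r\le k_0(m\lambda)^T$; otherwise $r^2/s_n^2(B^\ast)<1$ and the elementary estimate $I(h;\tfrac{1}{2},\tfrac{n-1}{2})\le c_n\sqrt h$ on $[0,1]$ (immediate from the integral definition, the case $n=2$ being $\arcsin\sqrt h\le\tfrac{\pi}{2}\sqrt h$) turns the display into $s_n(B^\ast)\le\bigl(c_n|S^{n-1}|k_0/\delta\bigr)(m\lambda)^T$. Since $s_n(B^\ast)=\|B^\ast\|$, in either case $\inf_{B\in\mathcal{M}^T}\|B\|\le C\,(m\lambda)^T$ with $C$ independent of $T$. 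Taking $T$-th roots and letting $T\to\infty$ (the limit exists because $\log\inf_{B\in\mathcal{M}^T}\|B\|$ is subadditive in $T$) gives $\check\rho\le m\lambda$, and letting $\lambda\downarrow\tilde\rho$ finishes the proof.

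I expect the genuine difficulty to lie in the middle step, namely making the $x_0$-dependent constant $c_{x_0}$ harmless: once a single multiplicative constant works on a set of positive measure, the covering-and-area step is routine, so the care is in the measurability and exhaustion of the sets $E_k$ via the compactness reduction to finite switching words, and in checking that the subsequent pigeonhole over the $m^T$ products of length $T$ costs only a factor that evaporates under the $T$-th root. The analytic input $I(h;\tfrac{1}{2},\tfrac{n-1}{2})=O(\sqrt h)$ is a minor computation, and because its constant $c_n$ is absorbed into $C$ and killed in the limit, even a very crude version of it suffices.
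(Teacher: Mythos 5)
Your proof is correct and follows essentially the same route as the paper: cover the unit sphere by the sets $S_{r,A_T}$, bound each area via the largest singular value and the estimate $I(h;\tfrac{1}{2},\tfrac{n-1}{2})=O(\sqrt{h})$, and play this off against the $m^T$ products of length $T$. You are in fact more careful than the paper on the one delicate point: the paper simply asserts that a single constant $c$ makes $\bigcup_\sigma S_{c\lambda^T,A_T}$ cover the whole sphere, whereas your exhaustion by the closed sets $E_k$ produces a uniform constant on a set of positive measure, which is all the covering step requires, and your explicit case split avoids relying on the asymptotic $I(h)\sim 2C\sqrt{h}$ as $h\to 0$.
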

\begin{proof}
    The inequality $\tilde{\rho} \leq \check{\rho}$ is from the definition of $\tilde{\rho}$.  
    For a given $T$, the area of the union of $S_{r,A_{\sigma,T}}$ under all $\sigma(\cdot)$ has an upper bound given by
\begin{equation}
    \left|\bigcup_\sigma S_{r,A_{\sigma,T}}\right| \leq \sum_{\sigma} |S_{r,A_{\sigma,T}}| \leq \sum_{\sigma} |S^{n-1}|I\left(\frac{r^2}{s^2_n(A_{\sigma,T})}; \frac{1}{2}, \frac{n-1}{2}\right)
\end{equation}
The sum above is over all possible choices of $\sigma(\cdot)$.
Since $I(h,1/2,(n-1)/2)$ is the cumulative distribution function of the beta distribution, it monotonically increases for $ 0 \leq h \leq 1$. 
Then
\begin{equation}
    I\left(\frac{r^2}{s^2_n(A_{\sigma,T})};\frac{1}{2}, \frac{n-1}{2}\right) \leq I \left(\frac{r^2}{S(T)^2};\frac{1}{2}, \frac{n-1}{2}\right)
\end{equation} 
holds for any $\sigma(\cdot)$, which means that 
\begin{equation}
    \left|\bigcup_\sigma S_{r,A_{\sigma,T}}\right| \leq m^T |S^{n-1}|I\left(\frac{r^{2}}{S(T)^2}; \frac{1}{2}, \frac{n-1}{2}\right).
\end{equation}
From the definition of $\tilde{\rho}$, for any $\lambda > \tilde{\rho}$, 
there exists a positive constant $c$ such that the union of pre-images, $S_{c\lambda^T,A_{\sigma,T}}$ for all $\sigma(\cdot)$, covers the unit sphere. Hence that
\begin{equation}\label{eq2}
    |S^{n-1}| = \left|\bigcup_\sigma S_{c\lambda^T,A_{\sigma,T}}\right| \leq m^T |S^{n-1}|I\left(\frac{c^2\lambda^{2T}}{S(T)^2}; \frac{1}{2}, \frac{n-1}{2}\right).
\end{equation}
Rearranging (\ref{eq2}), we have a bound of $\lambda$ given by
\begin{equation}
    I\left(\frac{c^2\lambda^{2T}}{S(T)^2}; \frac{1}{2}, \frac{n-1}{2}\right) \geq \frac{1}{m^{T}}.
\end{equation}

\rev{Now we need the asymptotic behavior of $I(h;1/2,(n-1)/2)$ when $h$ tends to $0$ to get a lower bound of $\tilde{\rho}$.
Notice that 
\begin{equation}
    I(h;1/2,(n-1)/2) = C\int_0^{h} t^{-\frac{1}{2}}(1-t)^{\frac{n-3}{2}}\text{d}t,
\end{equation}
for a constant $C$. Let $g(x)$ denote the integrand, then for any $n \geq 2$, as $x \rightarrow 0$, we have
$g(x) \sim x^{-\frac{1}{2}}$, i.e.
\begin{equation}
    \int_{0}^{h} g(x) \text{d}x \sim 2\sqrt{h}. 
\end{equation}
Then for large enough $T$, we have
\begin{equation}
    (2+\epsilon) C\frac{c\lambda^{T}}{S(T)} \geq \frac{1}{m^T},
\end{equation}
\rev{for any $\epsilon>0$}, which is
\begin{equation}
    ((2+\epsilon)Cc)^{1/T}\lambda \geq \frac{S(T)^{1/T}}{m}.
\end{equation}
Let $T$ tend to infinity. By lemma \ref{lm11}, we have
\begin{equation}
    \lambda \geq \frac{\check{\rho}}{m}
\end{equation}
for any $\lambda >\tilde{\rho}$. Our lower bound is obtained by letting $\lambda$ tend to $\tilde{\rho}$.}
\end{proof}
\rev{In general this is not a sharp bound, especially for large $m$. 
For Example \ref{ex5}, we have $\check{\rho}=1$. From Theorem \ref{thm1}, we know that $\tilde{\rho} \geq 1/2$, which is not as sharp as lower bound given by Theorem $2$ in \cite{dettmann2020lower} mentioned above.
However Theorem \ref{thm1} works for Example \ref{ex6}. As mentioned above, Example \ref{ex6} has the same dynamics as Example \ref{ex5}. Then they have the same joint spectral subradius $\check{\rho}=1$, which gives the same lower bound of $\tilde{\rho}$ as expected.}

From Theorem \ref{thm1}, \rev{we know that the stabilizability radius satisfies $\tilde{\rho} \geq \check{\rho}/m$}.
An immediate corollary of Theorem \ref{thm1} is that when $\tilde{\rho}=0$, we have $\check{\rho}=0$. 
In other word, when a matrix set pointwise converges to zero quickly enough, it converges to zero uniformly.

\section{Rank one singular matrices}
\rev{The rank one singular matrix (in $\mathbb{R}^{n \times n}$), i.e. the singular matrix with one dimensional image, is important in the study of the stabilizability radius. 
Due to the one dimensional image of such a matrix, the difference between $\tilde{\rho}$ and $\check{\rho}$ is eliminated.}
\begin{proposition}\label{lm7}
    \rev{If $\mathcal{M}$ contains at least one rank one singular matrix, then $\tilde{\rho}(\mathcal{M}) = \check{\rho}(\mathcal{M})$}
\end{proposition}
\begin{proof}
    \rev{Let us recall that the definition of $\tilde{\rho}$ of a matrix set is given by
    \begin{equation}\label{eq12}
        \tilde{\rho}(\mathcal{M}) = \sup_{x_0 \in S^{n-1}} \inf \{ \lambda \geq 0 \, | \, \exists \sigma(\cdot), c > 0 \; s.t. \; \|x(t)\| \leq c \lambda^t \, \forall t \geq 0\},
    \end{equation}
    The definition \eqref{eq12} is slightly different but equivalent to the definition \eqref{eq13}.
    Let $\tilde{\rho}_{x_0}(\mathcal{M})$ denote the infimum in \eqref{eq12}, i.e.
    \begin{equation}
        \tilde{\rho}(\mathcal{M})=\sup_{x_0 \in S^{n-1}} \tilde{\rho}_{x_0}(\mathcal{M})
    \end{equation}
    Let $\mathcal{M}_0$ denote a $\tilde{\rho}$-minimal matrix set containing at least one singular matrix with a one-dimensional image. Let $m_0$ denote the cardinality of $\mathcal{M}_0$, where each matrix is indexed from $\{1, \dots, m_0\}$. 
    Let $M_1$ be a singular matrix in $\mathcal{M}_0$ with one dimensional image, and $v_1$ be a normalized vector in its image, For any $x_0$ we have
    \begin{equation}
        \|M_1 x_0\| = C_{x_0}\|v_1\|,
    \end{equation}
    for some constant depending on $x_0$. Then from the definition of $\tilde{\rho}_{x_0}(\mathcal{M})$, we have
    \begin{equation}
        \tilde{\rho}_{x_0}(\mathcal{M}) \leq \tilde{\rho}_{M_1x_0}(\mathcal{M}) = \tilde{\rho}_{v_1}(\mathcal{M}), 
    \end{equation}
    for any $x_0 \in S^{n-1} \setminus \text{Ker}(M_1)$. 
    Then $\tilde{\rho}(\mathcal{M})=\tilde{\rho}_{v_1}(\mathcal{M})$, which is independent from initial vectors.}  
\end{proof}
\begin{corollary}\label{lm16}
    We have $\tilde{\rho}=\check{\rho}$ for any two dimensional matrix set with at least one singular matrix.
\end{corollary}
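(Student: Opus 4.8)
The plan is to reduce the statement to Proposition~\ref{lm7} by an elementary rank count. Let $M \in \mathcal{M}$ be a singular matrix. Since $M \in \mathbb{R}^{2\times 2}$, we have $\operatorname{rank} M \in \{0,1\}$. If $\operatorname{rank} M = 1$, then $M$ has a one-dimensional image, so $\mathcal{M}$ is an irreducible matrix set containing a singular matrix with one-dimensional image, and Proposition~\ref{lm7} applies verbatim to give $\tilde{\rho} = \check{\rho}$. Thus the only thing left to address is the degenerate case $\operatorname{rank} M = 0$, i.e.\ $M$ is the zero matrix.

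For that corner case I would give a short direct argument rather than invoking Proposition~\ref{lm7}. If $0 \in \mathcal{M}$, then every product $A_T = M_{\sigma(T)}\cdots M_{\sigma(1)}$ may be chosen equal to the zero matrix (take $\sigma(1)$ with $M_{\sigma(1)} = 0$), so $\inf_{A \in \mathcal{M}^T}\|A\| = 0$ for every $T$, and hence $\check{\rho}(\mathcal{M}) = 0$. Combining this with $0 \leq \tilde{\rho} \leq \check{\rho}$ from Theorem~\ref{thm1} already yields $\tilde{\rho} = \check{\rho} = 0$. (If one prefers, one can instead observe that this case is actually excluded by the hypotheses: when $m \geq 2$ the proper subset $\{0\} \subsetneq \mathcal{M}$ satisfies $\tilde{\rho}(\{0\}) = 0 = \tilde{\rho}(\mathcal{M})$, violating the strict inequality in the definition of irreducibility; and when $m = 1$ we have $\mathcal{M} = \{0\}$ with $\tilde{\rho} = \check{\rho} = 0$ trivially.) Either route disposes of the rank-$0$ possibility, and since the two sub-cases together exhaust all singular $2\times 2$ matrices, the corollary follows.

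I do not anticipate a genuine obstacle: the mathematical content sits entirely in Proposition~\ref{lm7}, and the corollary is essentially the remark that, for a nonzero $2\times 2$ matrix, ``singular'' is the same as ``having one-dimensional image.'' The one point I would be careful to state precisely is the bookkeeping around the zero matrix, so that the asserted equality is literally true — and not merely vacuous — in that degenerate configuration.
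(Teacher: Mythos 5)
Your proposal is correct and follows essentially the same route the paper intends: the corollary is stated as an immediate consequence of Proposition~\ref{lm7}, since a nonzero singular $2\times 2$ matrix automatically has a one-dimensional image. Your extra care with the rank-$0$ (zero-matrix) case is sound but not a genuine departure, as that configuration is either excluded by irreducibility or trivially gives $\tilde{\rho}=\check{\rho}=0$.
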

However, for arbitrary matrix set with singular matrices, we do not have such properties. In Example \ref{ex6}, we have $\check{\rho} = 1$ but this system is pointwise stabilizable as it behaves the same as Example \ref{ex5} (proved in \cite{stanford1979stability}).

Notice that for a given $\mathcal{M}$, \rev{a subset of $\mathcal{M}$ may be sufficient to stabilize the system, leaving some matrices redundant for the purposes of stabilization.} 
To avoid such phenomena, we have following definition.
\begin{definition}
    In this paper, a matrix set $\mathcal{M}$ is called \rev{$\tilde{\rho}$-minimal} if for any \rev{proper subset $\mathcal{M}_1$, i.e.} $\mathcal{M}_1 \subset \mathcal{M}$, we have 
    \begin{equation}
        \tilde{\rho}(\mathcal{M}_1) > \tilde{\rho}(\mathcal{M}).
    \end{equation}
\end{definition}
Not all linear discrete time switching systems are $\tilde{\rho}$-minimal. However, it is sufficient to only consider the stabilizability radius of $\tilde{\rho}$-minimal matrix sets. 
\begin{proposition}\label{lm12} 
    For any $\mathcal{M}$, there exists a $\tilde{\rho}$-minimal subset $\mathcal{M}'$ such that $\tilde{\rho}(\mathcal{M}')=\tilde{\rho}(\mathcal{M})$. 
\end{proposition}
\rev{The proof of Proposition \ref{lm12} trivially comes from Corollary 3.4 in \cite{stanford1994some}.
We have the following proposition of $\mathcal{M}$ with only rank one singular matrices
\begin{proposition}\label{lm17}
    For any $n \geq 2$ and $m \in \mathbb{Z}^{+}$, we can find at least one matrix set $\mathcal{M} \subseteq \mathbb{R}^{n \times n}$ with cardinality $m$ such that $\mathcal{M}$ is $\tilde{\rho}$-minimal and contains only rank one singular matrices.
\end{proposition}}
\begin{proof}
    \rev{We introduce a $\mathcal{M}$ for $n=2$ and any $m$ in the following. Let $M_i$ denote matrices in $\mathcal{M}$, we have 
    \begin{equation}
        M_i=s_n(M_i)v_i w^T_i, \; i \in {1, \dots, m},
    \end{equation}
    where $s_n(M_i)$ denotes the largest singular value as in section $2$, $v_i$ and $w_i$ are normalized vectors. 
    Now let $s_n(M_i)=1$ and 
    $$w_i=(\cos (\theta_i + \epsilon), \sin (\theta_i + \epsilon))^T,\; v_i=(\cos (\theta_{i-1}+\pi/2), \sin (\theta_{i-1}+\pi/2))^T, \; i \in {1, \dots, m},$$ 
    where $\theta_i \in (0,2\pi)$, for $i \in \{0, \dots, m\}$ and $\epsilon>0$.
    Then we have 
    \begin{equation}
        M_iM_j = \cos( \theta_i -\theta_{j-1} - \pi/2 + \epsilon) v_{i}w^T_j, \; i, \; j \in \{1, \dots, m\}. 
    \end{equation}
    Choosing $\theta_0 =\theta_m = 3\pi/2$ and all $\theta_i$ such that:
    \begin{equation}\label{eq21}
        \theta_i \neq \theta_j +k \pi, \;i \neq j \in \{0, \dots, m-1\}, \; k\in \mathbb{Z}.
    \end{equation}
    Then there exists $\epsilon>0$ such that
    \begin{equation}\label{eq20}
        |\sin(\theta_i - \theta_j +\epsilon)| \geq \sin \epsilon, \; i,j \in \{0, \dots, m-1\}.
    \end{equation} 
    Consider any combination of matrices with length $L$. We have
    \begin{equation}
        A_{\sigma, L} =M_{\sigma(L)} \cdots M_{\sigma(1)}= v_{\sigma(L)}w^T_{\sigma(1)} \prod_{i=1}^{L-1} \cos\left( \theta_{\sigma(i)} -\theta_{\sigma(i+1)-1} - \pi/2 + \epsilon\right).   
    \end{equation}
    Then the lower spectral radius of $\mathcal{M}$ is given by
    \begin{equation}\label{eq22}
        \check{\rho} = \lim_{L \rightarrow \infty} \inf_{\sigma}  \|A_{\sigma, L}\|^{1/L} = \lim_{L \rightarrow \infty} \inf_{\sigma} \|v_{\sigma(L)}w^T_{\sigma(1)} \|^{1/L} \left| \prod_{i=1}^{L-1} \sin\left( \theta_{\sigma(i)} -\theta_{\sigma(i+1)-1}+ \epsilon\right)\right|^{1/L} 
    \end{equation}
    By \eqref{eq20} we have
    \begin{equation}
        \check{\rho} \geq \lim_{L \rightarrow \infty} \|v_{\sigma(L)}w^T_{\sigma(1)} \|^{1/L} (\sin \epsilon)^{(L-1)/L} \geq \sin \epsilon.
    \end{equation}
    Choose $\sigma_k=m+1-k$. We have 
    \begin{equation}
        (M_1M_2 \cdots M_m)^k = (\sin \epsilon)^{mk-1} v_1 w^T_m, \; k \in \mathbb{Z}^{+},
    \end{equation}
    is an optimal sequence of $\check{\rho}$. In view of \eqref{eq21}, equality in \eqref{eq20} hold only if $i=j$. Thus the only optimal sequence has $\sigma(i)=\sigma(i+1)-1$ in \eqref{eq22}, that is, $(M_1M_2 \cdots M_m)^k$. 
    By Proposition \ref{lm7}, the optimal sequence of $\tilde{\rho}$ is the same as the optimal sequence of $\check{\rho}$, so this $\mathcal{M}$ is a $\tilde{\rho}$-minimal set.}    
\end{proof}
\rev{Let us mention $\mathcal{M}$ with $n=2$ and $m=3$ as an example of the proof of proposition \ref{lm17}. Let $v_1=(1,0)^T$, $\theta_1=\pi/6$, $\theta_2=5\pi/6$, $w_3 = (\cos (3\pi/2 + \epsilon), \sin (3\pi/2 + \epsilon))^T$.
It is easy to verify that the sequence $M_i^k$ give us 
$$\lim_{L \rightarrow \infty} \|A_{\sigma,L}\|^{1/L} = |\cos(\pi/6 + \epsilon)|,$$
and sequence $(M_iM_j)^k$ give us
$$\lim_{L \rightarrow \infty} \|A_{\sigma,L}\|^{1/L} = |\sin \epsilon \cos(\pi/6 - \epsilon)|^{1/2},$$
for $i,j \in \{1,2,3\}$. Finally, the sequence $(M_1M_2M_3)^k$ give us 
$$\lim_{L \rightarrow \infty} \|A_{\sigma,L}\|^{1/L} = |\sin \epsilon|.$$
Thus for sufficiently small $\epsilon$, $\mathcal{M}$ is a $\tilde{\rho}$-minimal set.}

\section{Two dimensions: Elliptic case}
\subsection{A singular and a rotation matrix}
\rev{In Corollary \ref{lm16}, we mentioned $\tilde{\rho}= \check{\rho}$ for any two switched system with singular matrices. Then to better understand the structure of linear switched systems with singular matrices,} we start from two dimensional linear switched systems.
\begin{system}\label{sys1}
Consider a two dimensional switched system with a set of matrices $\mathcal{M}=\{M_1, M_2\}$ with $M_1$ a singular matrix and $M_2$ a matrix with two complex eigenvalues, namely
\begin{equation}\label{eq4}
    M_1=\left(\begin{matrix}
        a_{11} & a_{12} \\
        a_{21} & a_{22}
    \end{matrix}\right), \quad
    M_2=\left(\begin{matrix}
        b_{11} & b_{12}\\
        b_{21} & b_{22}
    \end{matrix}\right),
\end{equation}
where $a_{ij}, b_{ij} \in \mathbb{R}$, $a_{11}a_{22}-a_{12}a_{21}=0$ and $(b_{11}-b_{22})^2 +4b_{12}b_{21} < 0$.
\end{system}

Let $\lambda_3$ and $\overline{\lambda}_3$ denote two eigenvalues of $M_2$.
The matrix $M_2$ is similar to its real Jordan form (see for example Theorem 3.4.1.5. in \cite{horn2012matrix}),
\begin{equation*}
    J=|\lambda_3| \left(\begin{matrix}
        \cos \alpha \pi & \sin \alpha \pi \\
        -\sin \alpha \pi &  \cos \alpha \pi
    \end{matrix}\right)
\end{equation*}
where $\cos \rev{\alpha \pi} = \rev{|\lambda_3 + \overline{\lambda}_3|}/2|\lambda_3|$. Thus, there exists an invertible matrix $P$ such that $M_2=PJP^{-1}$. 
Let $M'_1=P^{-1}M_1P$, then the system \ref{sys1} is stabilizable if and only if the following switched system is stabilizable.
\begin{system}\label{sys2}
Consider a two dimensional switched system with a set of matrices $\mathcal{M}=\{M'_1, J\}$ such that
\begin{equation}
    M'_1=\left(\begin{matrix}
        a'_{11} & a'_{12} \\
        a'_{21} & a'_{22}
    \end{matrix}\right), \quad
    J=|\lambda_3|\left(\begin{matrix}
        \cos \alpha \pi & \sin \alpha \pi\\
        -\sin \alpha \pi & \cos \alpha \pi
    \end{matrix}\right),
\end{equation}
where $a_{ij} \in \mathbb{R}$, $a'_{11}a'_{22}-a'_{12}a'_{21}=0$.
\end{system}
\rev{Let $\lambda_1$ and $\lambda_2$, $v_1$ and $v_2$ denote two eigenvalues and \rev{normalized} eigenvectors of $M'_1$. If $\lambda_1=\lambda_2=0$, $M_1$ is nilpotent, then System \ref{sys2} becomes trivial.
Let $\lambda_2 \in \mathbb{R}\setminus\{0\}$ and $\lambda_1 = 0$, then we have following results.}
\begin{lemma}\label{lm6}
The stabilizability radius of System \ref{sys2} is given by
\begin{equation}
        \tilde{\rho} = |\lambda_3| \min \left\{1, \, \inf_{l \in \mathbb{N}} \left| \frac{\lambda_2}{\lambda_3} \frac{\sin(l\alpha - \beta)\pi}{\sin \beta \pi} \right|^{1/(l+1)} \right\},
\end{equation}
where $\beta \pi=\arccos|\left<v_1,v_2\right>|$.
\end{lemma}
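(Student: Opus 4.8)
The plan is to pass to System \ref{sys2}, in which the second matrix is a genuine scaled rotation, and then to reduce the computation of $\tilde\rho$ to a one–parameter family of ``block'' products. Systems \ref{sys1} and \ref{sys2} are conjugate, hence have the same stabilizability radius, so we may work with $\mathcal M=\{M_1',J\}$, where $J=|\lambda_3|R$ and $R$ is the rotation appearing in $J$ (whose angle I write as $\alpha\pi$, the normalisation used in the statement). Since $M_1'$ has eigenvalues $0$ and $\lambda_2$, it has rank one, with kernel $\mathbb R v_1$ and image $\mathbb R v_2$, where $v_1,v_2$ are its unit eigenvectors (equivalently, those of $M_1$ read in the coordinates of System \ref{sys2}) and $\beta\pi=\arccos|\langle v_1,v_2\rangle|$ is the angle between them. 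Exactly as in the proof of Proposition \ref{lm7}, any trajectory may apply $M_1'$ once and land on the line $\mathbb R v_2$ (or at $0$), a step that does not affect the asymptotic rate, and $\|w_t\|^{1/t}$ is unchanged under rescaling of $v_2$; hence the supremum over $x_0$ in the definition of $\tilde\rho$ is attained at $x_0=v_2$, so
\begin{equation*}
    \tilde\rho=\inf_{\sigma}\ \limsup_{t\to\infty}\|w_t\|^{1/t},\qquad w_0=v_2,\ w_{t+1}=M_{\sigma(t)}w_t.
\end{equation*}

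Next I would decompose a switching sequence from $v_2$ into blocks. If $M_1'$ is used only finitely often the tail is a product of $J$'s and, since $R$ is an isometry, the rate is exactly $|\lambda_3|$. Otherwise let $M_1'$ act at times $t_1<t_2<\cdots$, with $J^{l_j}$ applied in between ($l_j\ge0$). Because $\mathrm{Im}(M_1')=\mathbb R v_2$, after each use of $M_1'$ the state returns to $\mathbb R v_2$, so the trajectory factorises: with $P_l:=\|M_1'J^lv_2\|$ (and $\|v_2\|=1$), the norm just after the $k$-th use of $M_1'$ is $\prod_{j<k}P_{l_j}$, over elapsed time $\sum_{j<k}(l_j+1)$. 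The rate along these block-boundary times is thus a weighted geometric mean of the numbers $P_{l_j}^{1/(l_j+1)}$, hence at least $\inf_{l\in\mathbb N}P_l^{1/(l+1)}$; a standard $\limsup$-versus-subsequence argument upgrades this to a lower bound for $\limsup_t\|w_t\|^{1/t}$, and it also covers the ``finitely many $M_1'$'' case because $\inf_l P_l^{1/(l+1)}\le|\lambda_3|$ (let $l\to\infty$). Conversely, the periodic sequence $(J^lM_1')^{\infty}$ from $v_2$ attains the rate $P_l^{1/(l+1)}$ for each $l$. Hence $\tilde\rho=\inf_{l\in\mathbb N}P_l^{1/(l+1)}$.

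It then remains to evaluate $P_l$. Write the rank-one map as $M_1'w=\lambda_2\langle v_1^\perp,w\rangle\,v_2/\langle v_1^\perp,v_2\rangle$, where $v_1^\perp$ is the unit normal to $v_1$; for unit $v_1,v_2$ we have $|\langle v_1^\perp,v_2\rangle|=\sin\beta\pi$. Since $J^lv_2=|\lambda_3|^lR^lv_2$ and $R^l$ is the rotation by $l\alpha\pi$, plane trigonometry (the angle between $v_2$ and $v_1^\perp$ is $\pi/2$ minus that between $v_1$ and $v_2$) gives $|\langle v_1^\perp,J^lv_2\rangle|=|\lambda_3|^l|\sin((l\alpha\mp\beta)\pi)|$, and the sign is fixed to ``$-$'' by choosing which eigenvalue of $M_2$ plays the role of $\lambda_3$ (this reverses the rotation). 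Therefore
\begin{equation*}
    P_l=|\lambda_2|\,|\lambda_3|^l\,\frac{|\sin((l\alpha-\beta)\pi)|}{\sin\beta\pi}=|\lambda_3|^{\,l+1}\left|\frac{\lambda_2}{\lambda_3}\,\frac{\sin((l\alpha-\beta)\pi)}{\sin\beta\pi}\right|,
\end{equation*}
so $P_l^{1/(l+1)}=|\lambda_3|\,\big|\tfrac{\lambda_2}{\lambda_3}\tfrac{\sin((l\alpha-\beta)\pi)}{\sin\beta\pi}\big|^{1/(l+1)}$, and taking the infimum over $l$ yields the claimed formula.

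The geometry of the last step is routine; I expect the block decomposition and its optimality to be the main obstacle. One must handle sequences that never use $M_1'$, sequences that steer the state into $\ker M_1'$ and so annihilate it (which forces $P_l=0$ for some $l$, consistent with the formula), the case where $\{M_1',J\}$ is reducible (pass to an irreducible subset via \cite{stanford1994some}, and the formula degenerates to its $l=0$ or $l\to\infty$ endpoint), and the bookkeeping relating the full $\limsup$ to the subsequence of block-boundary times. A secondary nuisance is tracking the sign and the $\pi$-normalisation of $\alpha$ so that the argument of the sine is exactly $(l\alpha-\beta)\pi$.
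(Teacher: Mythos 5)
Your proposal is correct and follows essentially the same route as the paper's proof: conjugate to System~\ref{sys2}, observe that after one application of $M_1'$ the state lies on $\mathbb{R}v_2$ so the initial vector is irrelevant, decompose the trajectory into blocks $J^{l}M_1'$ with per-block factor $P_l=|\lambda_2|\,|\lambda_3|^{l}\,|\sin((l\alpha-\beta)\pi)|/\sin\beta\pi$, and take the infimum of $P_l^{1/(l+1)}$. Your weighted-geometric-mean argument showing that variable block lengths cannot beat the best constant block length (and your handling of the finitely-many-$M_1'$ and annihilation cases) makes explicit steps the paper's proof leaves implicit when it simply sets all $t_i=l$.
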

\begin{remark}
    \rev{we have $\alpha \in (0,1)$ and $\beta \in (0, 1/2)$.}
\end{remark}
\begin{proof}
    To reduce the norm of $x$, the general approach is to rotate (apply $J$) as close to $v_1$ as possible, then apply $M'_1$. 
    Consider the switching law in following general forms: $A_{\sigma,t}=J^t$ or
    \begin{equation}\label{eq24}
        A_{\sigma,t} = J^{t_k} M'_1 J^{t_{k-1}} \cdots M'_1 J^{t_1},
    \end{equation}
    where $t_i \in \mathbb{N}^{+}$ for $i \in {2, \dots, k-1}$, $t_k,\; t_1 \in \mathbb{N}$ and  
    $$\sum_{i=1}^{k} t_i +k-1= t.$$
    \rev{When $A_{\sigma,t}=J^t$, we have $\tilde{\rho} = |\lambda_3|$. Now consider the optimal sequence of the form \eqref{eq24}.} For any vector $x \in \mathbb{R}^2$, consider the polar coordinates of $x$, i.e. $x = (r,\; \theta \pi)$, we have 
    \begin{equation}
        J^t x= (|\lambda_3|^t r,\; (\theta -t\alpha) \pi).        
    \end{equation}
    Without losing generality, we assume \rev{the line in the direction of $v_1$} is the $x$ axis, Then we have
    \begin{equation}
        M'_1 x= \left(\left|\lambda_2 r \frac{\sin \theta \pi}{\sin \beta \pi}\right|,\; \beta \pi +k\pi\right),\; k \in \mathbb{Z}.
    \end{equation}
    Notice that after the first time applying $M'_1$, the trajectory starts from $v_2$.
    Let $t_i =l$ for $i= 2 \dots k-1$ and $\|M'_1J^{t_1}x\| = C \|x\|$ \rev{for some constant $C$ only depending on $t_1$ and $x$}, and we have
    \begin{equation}\label{eq7}
        \|x_{\sigma,x_0}(t) \| = \left|\lambda_2 |\lambda_3|^l \frac{\sin (\beta - l\alpha) \pi}{\sin \beta \pi}\right|^{k-2} C \|x_0\|. 
    \end{equation}
    \rev{Since $C$ is independent from $k$, from the definition of $\tilde{\rho}$ we have
    \begin{equation}
        \tilde{\rho}=\sup_{x_0} \inf_l \left|\lambda_2 |\lambda_3|^l \frac{\sin (\beta - l\alpha) \pi}{\sin \beta \pi}\right|^{1/(l+1)} = \inf_l \left|\lambda_2 |\lambda_3|^l \frac{\sin (\beta - l\alpha) \pi}{\sin \beta \pi}\right|^{1/(l+1)}.
    \end{equation}}
\end{proof}

To find the infimum in Lemma \ref{lm6}, we need to know $\|l \alpha - \beta \|_N$ for $l \in \mathbb{N}$ with $\| \cdot \|_N$ from number theory denoting the distance to the closest integer.
\rev{This is a Diophantine approximation problem and we need the continued fraction of $\alpha$ to solve it(see for example \cite{rockett1992continued}).} 
Let $[a_0; a_1, a_2, \dots ]$ be the continued fraction of $\alpha$,
\begin{equation}
    \frac{p_k}{q_k} = [a_0;a_1, \dots , a_k] \quad (k \geq 0),
\end{equation}
and $D_k = q_k \alpha - p_k$. 
\rev{Knowing these, let us introduce several lemmas from \cite{beresnevich2020sums}.}
\begin{lemma}\label{lm1}
    (\cite{beresnevich2020sums} Lemma (3.1))  For any $\alpha \in \mathbb{R} \setminus \mathbb{Q}$, we have $l \in \mathbb{N}$ satisfies
    $$q_K \leq l < q_{K+1}$$
    for some $q_K$ of $\alpha$. There exists a unique sequence $\{c_{k+1}\}^{\infty}_{k=0}$ such that
    \begin{equation}
        l= \sum_{k=0}^{\infty} c_{k+1} q_k.
    \end{equation}
    where $c_{k+1}$ satisfies 
    \begin{itemize}
        \item $c_{k+1} = 0, \; \forall k > K$,
        \item $0 \leq c_1 < a_1$ and $0 \leq c_{k+1} \leq a_{k+1}, \forall k \geq 1$,
        \item $c_k =0$ whenever $c_{k+1} =a_{k+1}, \forall k \geq 1$. 
    \end{itemize}
\end{lemma}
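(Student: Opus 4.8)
The statement is the \emph{Ostrowski representation} of a natural number relative to the continued fraction of $\alpha$, and I would prove its two halves --- existence and uniqueness of the digit sequence $\{c_{k+1}\}$ --- separately, both resting only on the denominator recurrence $q_{-1}=0$, $q_0=1$, $q_{k+1}=a_{k+1}q_k+q_{k-1}$ (so $q_1=a_1$) together with the monotonicity $q_{k-1}\le q_k$ for $k\ge 1$. Note that $D_k$ plays no role in this lemma; it is recorded for later use, so the entire argument is about the integer recurrence above.

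For existence I would run greedy digit extraction from the top index downward. Given $l$ with $q_K\le l<q_{K+1}$, put $l_K:=l$ and recursively, for $k=K,K-1,\dots,1$, set $c_{k+1}:=\lfloor l_k/q_k\rfloor$ and $l_{k-1}:=l_k-c_{k+1}q_k$, then $c_1:=l_0$ and $c_{k+1}:=0$ for $k>K$. The core is a downward induction on $k$ showing $0\le l_k<q_{k+1}$ throughout: this gives $c_{k+1}=\lfloor l_k/q_k\rfloor\le\lfloor(a_{k+1}q_k+q_{k-1}-1)/q_k\rfloor=a_{k+1}$, hence $0\le c_{k+1}\le a_{k+1}$; moreover if $c_{k+1}=a_{k+1}$ then $l_{k-1}=l_k-a_{k+1}q_k<q_{k-1}$, so at the next step $c_k=\lfloor l_{k-1}/q_{k-1}\rfloor=0$, which is exactly the carry condition. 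The terminal step gives $0\le c_1=l_0<q_1=a_1$, and telescoping $l_{k-1}=l_k-c_{k+1}q_k$ yields $l=\sum_{k=0}^{K}c_{k+1}q_k$.

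For uniqueness the key estimate is: any finite string $(c_1,\dots,c_m)$ satisfying the three admissibility conditions obeys $\sum_{k=0}^{m-1}c_{k+1}q_k\le q_m-1$. I would prove this by induction on $m$ (base case $c_1\le a_1-1=q_1-1$), splitting the inductive step according to whether $c_{m+1}=a_{m+1}$ --- in which case $c_m=0$ removes the $q_{m-1}$ term and $\sum_{k=0}^{m}c_{k+1}q_k\le a_{m+1}q_m+q_{m-1}-1=q_{m+1}-1$ --- or $c_{m+1}\le a_{m+1}-1$, where $\sum_{k=0}^{m}c_{k+1}q_k\le(a_{m+1}-1)q_m+(q_m-1)\le q_{m+1}-1$; both cases close via the recurrence. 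Granting this bound, two admissible representations of the same $l$ must share the same top nonzero index, necessarily the $K$ with $q_K\le l<q_{K+1}$ (a smaller top index would force $l\le q_K-1$, a larger one $l\ge q_{K+1}$); then $c_{K+1}=\lfloor l/q_K\rfloor$ is forced since the lower part is $<q_K$, and a downward induction after subtracting $c_{K+1}q_K$ matches all remaining digits.

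The main obstacle is the admissibility bookkeeping: one must verify that the carry condition ($c_k=0$ whenever $c_{k+1}=a_{k+1}$) is \emph{simultaneously} produced by the greedy algorithm and is \emph{exactly} the constraint needed to rule out the two-representation ambiguity (it is the analogue of forbidding trailing $\ldots 1\,0\,0\,0\ldots=\ldots 0\,9\,9\,9\ldots$ in base ten). A secondary point to keep straight is the degenerate case $a_1=1$, where $q_0=q_1$: the inequalities must carry the correct strict/non-strict signs so that this case is absorbed without separate treatment, and likewise one should confirm the claim holds vacuously/correctly when $l$ lies just at a denominator $q_K$.
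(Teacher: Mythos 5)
Your proof is correct, but there is nothing in the paper to compare it against: the authors state this lemma as an imported result, citing Lemma~3.1 of \cite{beresnevich2020sums}, and give no proof of their own. What you have written is a complete and standard proof of the Ostrowski representation. The two halves are sound: the top-down greedy extraction with the invariant $0\le l_k<q_{k+1}$ correctly produces the digit bounds and the carry condition (the step $c_{k+1}=a_{k+1}\Rightarrow l_{k-1}<q_{k-1}\Rightarrow c_k=0$ is exactly right), and the uniqueness argument via the maximal-sum estimate $\sum_{k=0}^{m-1}c_{k+1}q_k\le q_m-1$ (proved by the two-case induction using the recurrence $q_{m+1}=a_{m+1}q_m+q_{m-1}$) correctly pins down the top index as $K$ and then forces each digit in turn. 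Your attention to the degenerate case $a_1=1$ (where $q_0=q_1$ and the strict inequality $c_1<a_1$ forces $c_1=0$) is the one place where a careless version of this argument could go wrong, and you handle it. The only cosmetic remark is that the lemma as stated in the paper is slightly garbled English for ``for every $l\in\mathbb{N}$ there is a unique $K$ with $q_K\le l<q_{K+1}$,'' which your proof implicitly supplies via the monotonicity and unboundedness of the $q_k$; it would be worth stating that existence of $K$ explicitly.
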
  
\begin{lemma}\label{lm2}
    (\cite{beresnevich2020sums} Lemma (3.2)) For any $\alpha \in [0,1) \setminus \mathbb{Q}$ and $\theta \in [-\alpha, 1-\alpha)$, there exists a unique integer sequence $\{b_{k+1}\}^{\infty}_{k=0}$ such that
    \begin{equation}
        \theta = \sum_{k=0}^{\infty} b_{k+1} D_k.
    \end{equation}
    where $b_{k+1}$ satisfies 
    \begin{itemize}
        \item $0 \leq b_1 < a_1$ and $0 \leq b_{k+1} \leq a_{k+1}, \forall k \geq 1$,
        \item $b_k =0$ whenever $b_{k+1} =a_{k+1}, \forall k \geq 1$. 
    \end{itemize}
\end{lemma}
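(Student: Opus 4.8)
The statement is the classical Ostrowski numeration theorem, so one legitimate option is simply to invoke \cite{beresnevich2020sums}; a self-contained argument would run along the following lines. The plan is to show that the admissible digit strings organise the interval $[-\alpha,1-\alpha)$ into a consistently refining family of partitions, and then to read off existence and uniqueness at once. First I would record the only facts about $\alpha$ that are needed, all standard consequences of the convergent recurrences $p_{k+1}=a_{k+1}p_k+p_{k-1}$, $q_{k+1}=a_{k+1}q_k+q_{k-1}$ (with $p_{-1}=1$, $q_{-1}=0$, $p_0=0$, $q_0=1$, since $\alpha\in[0,1)$): the numbers $D_k=q_k\alpha-p_k$ satisfy $D_{-1}=-1$, $D_0=\alpha$ and the same three-term recurrence $D_{k+1}=a_{k+1}D_k+D_{k-1}$; consequently $\operatorname{sgn}D_k=(-1)^k$, the sequence $|D_k|$ strictly decreases to $0$ with $|D_k|<1/q_{k+1}$, and the Euclidean-type identity $|D_{k-1}|=a_{k+1}|D_k|+|D_{k+1}|$ holds for every $k\ge 0$ (the case $k=0$ being $1=a_1\alpha+|D_1|$).

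The heart of the proof is a single claim established by induction on $K$: the admissible prefixes $(b_1,\dots,b_{K+1})$ — those with $0\le b_1<a_1$, $0\le b_{k+1}\le a_{k+1}$ for $1\le k\le K$, and $b_k=0$ whenever $b_{k+1}=a_{k+1}$ — label a partition of $[-\alpha,1-\alpha)$ into half-open intervals $I(b_1,\dots,b_{K+1})$ that (a) refine consistently, so that extending a prefix by an admissible digit subdivides its interval, (b) each contain the partial sum $\sum_{k=0}^{K}b_{k+1}D_k$, and (c) have length at most $|D_{K-1}|$, hence tending to $0$. The base case is the single interval $[-\alpha,1-\alpha)$, which one checks coincides with the range of all sums $\sum_{k\ge 0}b_{k+1}D_k$ over admissible sequences. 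The inductive step rests on the identity $|D_K|=a_{K+2}|D_{K+1}|+|D_{K+2}|$: translating a stage-$K$ interval by $b_{K+2}D_{K+1}$ for $b_{K+2}=0,1,\dots,a_{K+2}$ cuts it into consecutive sub-intervals, the last one (for $b_{K+2}=a_{K+2}$) being the short one of length comparable to $|D_{K+2}|$, and the side condition ``$b_{K+1}=0$ when $b_{K+2}=a_{K+2}$'' is exactly what prevents that short piece from re-covering a point already claimed — the usual device forbidding trailing strings of maximal digits, as with $0.0999\ldots=0.1$ in base ten. That the number of admissible prefixes is $q_{K+1}$ is consistent with Lemma~\ref{lm1}, which already realises the bijection between admissible data supported on indices $\le K$ and $\{0,\dots,q_{K+1}-1\}$.

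With the partitions in hand the conclusion is quick. Given $\theta\in[-\alpha,1-\alpha)$, for each $K$ there is a unique admissible prefix with $\theta\in I(b_1^{(K)},\dots,b_{K+1}^{(K)})$; by the refinement property these prefixes are nested, so they determine a single admissible sequence $(b_k)_{k\ge 1}$, and since the lengths of these intervals tend to $0$ while both $\theta$ and the partial sum $\sum_{k=0}^{K}b_{k+1}D_k$ lie in them, we get $\theta=\sum_{k\ge 0}b_{k+1}D_k$. Uniqueness is immediate: any admissible sequence with this sum has every prefix lying in an interval that contains $\theta$, hence that prefix must be $(b_1^{(K)},\dots,b_{K+1}^{(K)})$ for every $K$, by the partition property.

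The step I expect to be the real work is verifying the inductive partition claim, specifically tracking the half-open endpoints through the sign change of $D_{K+1}$ (the intervals accrete to the left at even stages and to the right at odd stages) and checking that the clause ``$b_{K+1}=0$ when $b_{K+2}=a_{K+2}$'' removes precisely the one overlap, so that the subdivision is a genuine partition and not merely a cover. Making the base case ($b_1<a_1$ strictly, via $1=a_1\alpha+|D_1|$) dovetail with the general step is the other fiddly point; once these are settled, the remaining estimates are routine bookkeeping with the identity $|D_{k-1}|=a_{k+1}|D_k|+|D_{k+1}|$.
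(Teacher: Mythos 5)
The paper does not prove this lemma at all: it is imported verbatim, with attribution, as Lemma 3.2 of \cite{beresnevich2020sums}, so there is no in-paper argument to compare yours against. Your fallback of simply invoking the citation is exactly what the authors do, and your self-contained sketch is a faithful outline of the standard proof of the Ostrowski representation theorem. The supporting identities you list are all correct: with $p_{-1}=1$, $q_{-1}=0$, $p_0=0$, $q_0=1$ one indeed gets $D_{-1}=-1$, $D_0=\alpha$, the recurrence $D_{k+1}=a_{k+1}D_k+D_{k-1}$, alternating signs, $|D_k|<1/q_{k+1}$, and the telescoping identity $|D_{k-1}|=a_{k+1}|D_k|+|D_{k+1}|$ (with base case $1=a_1\alpha+|D_1|$), which is precisely what makes the interval lengths add up and the count of admissible prefixes equal $q_{K+1}$, consistent with Lemma \ref{lm1}.

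The one substantive deferral is the inductive partition claim itself, and you correctly identify it as the crux. When you carry it out, the edge case to watch is the family of admissible sequences that are eventually maximal on the indices of one parity: for instance $b_1=a_1-1$, $b_{2k}=0$, $b_{2k+1}=a_{2k+1}$ for all $k\ge 1$ satisfies conditions (i) and (ii) yet sums (by telescoping) to exactly $1-\alpha$, the excluded right endpoint. So admissibility alone does not confine all sums to the open side of the interval; it is the combination of the half-open endpoint conventions in your cylinder intervals and the clause ``$b_k=0$ when $b_{k+1}=a_{k+1}$'' that must be shown to prevent two distinct admissible sequences from representing the same $\theta\in[-\alpha,1-\alpha)$ (the uniqueness argument reduces to showing that equality in the tail estimate forces an eventually-maximal tail adjacent to a nonzero digit, which (ii) forbids). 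This is routine but genuinely fiddly bookkeeping, exactly as you anticipate; as a proof plan your proposal is sound and matches the argument in the cited source.
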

Details of $c_{k+1}$ and $b_{k+1}$ can be found in \cite{beresnevich2020sums}.

\begin{lemma}\label{lm3}
    (\cite{beresnevich2020sums} Lemma (4.3)) Consider any $\alpha \in [0,1) \setminus \mathbb{Q}$ and $\theta \in [-\alpha, 1-\alpha)$, and suppose that
    \begin{equation}
        \|l\alpha - \theta \|_N > 0, \; \forall l \in \mathbb{N}.
    \end{equation}
    There exists a smallest integer $k_0$ such that $c_{k_0+1} \neq b_{k_0+1}$ and we define
    \begin{equation}
        \Sigma = \sum_{k=k_0}^{\infty} \delta_{k+1} D_k,
    \end{equation}
    where $\delta_{k+1} = c_{k+1}-b_{k+1}$ with $c_{k+1}$ and $b_{k+1}$ from Lemma \ref{lm1} and \ref{lm2}. Then we have
    \begin{equation}
        \|l\alpha - \theta \|_N = \|\Sigma\|_N.
    \end{equation}
\end{lemma}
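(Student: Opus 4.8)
The plan is to reduce $l\alpha-\theta$ modulo $1$ to the single series $\Sigma$, by substituting the two expansions supplied by Lemmas \ref{lm1} and \ref{lm2} and using the defining relation $D_k=q_k\alpha-p_k$. First I would invoke Lemma \ref{lm1} to write $l=\sum_{k=0}^{\infty}c_{k+1}q_k$, which is in fact a finite sum because $c_{k+1}=0$ for $k>K$. Multiplying by $\alpha$ and replacing each $q_k\alpha$ by $D_k+p_k$ gives
\begin{equation}
    l\alpha = \sum_{k=0}^{\infty}c_{k+1}D_k + \sum_{k=0}^{\infty}c_{k+1}p_k,
\end{equation}
where the second sum is a finite sum of integers and hence an integer. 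Next, by Lemma \ref{lm2} one has $\theta=\sum_{k=0}^{\infty}b_{k+1}D_k$, and the constraints $0\le b_{k+1}\le a_{k+1}$ together with $|D_k|<1/q_{k+1}$ make this series absolutely convergent, so subsequent regrouping is legitimate.

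Subtracting the two expansions and collecting the integer part yields
\begin{equation}
    l\alpha-\theta = \left(\sum_{k=0}^{\infty}c_{k+1}p_k\right) + \sum_{k=0}^{\infty}(c_{k+1}-b_{k+1})D_k = (\text{integer}) + \sum_{k=0}^{\infty}\delta_{k+1}D_k,
\end{equation}
the last series being the difference of a finite sum and an absolutely convergent one. By the choice of $k_0$ as the smallest index with $c_{k_0+1}\neq b_{k_0+1}$, we have $\delta_{k+1}=0$ for every $k<k_0$, so $\sum_{k=0}^{\infty}\delta_{k+1}D_k=\sum_{k=k_0}^{\infty}\delta_{k+1}D_k=\Sigma$. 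Thus $l\alpha-\theta$ and $\Sigma$ differ by an integer, and since $\|\cdot\|_N$ is invariant under integer shifts, $\|l\alpha-\theta\|_N=\|\Sigma\|_N$, as claimed. The only remaining point is that $k_0$ is well defined: if instead $c_{k+1}=b_{k+1}$ for all $k$, then $b_{k+1}=0$ for $k>K$ as well, hence $\theta=\sum_{k=0}^{K}c_{k+1}D_k$ and $l\alpha-\theta=\sum_{k=0}^{K}c_{k+1}p_k\in\mathbb{Z}$, contradicting the hypothesis $\|l\alpha-\theta\|_N>0$; so the hypothesis is used precisely here.

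The main obstacle is the bookkeeping with the infinite series rather than any deep idea: one must check that the $D_k$ alternate in sign and decay fast enough that $\sum b_{k+1}D_k$ converges absolutely (exactly the content underlying Lemma \ref{lm2}), that the finite $l$-expansion can be combined term-by-term with the infinite $\theta$-expansion, and that the vanishing of $\delta_{k+1}$ for $k<k_0$ genuinely truncates the sum down to $\Sigma$. Once these routine points are in place, the congruence $l\alpha\equiv\Sigma\pmod 1$ falls out of the substitution above and the conclusion is immediate.
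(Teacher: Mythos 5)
Your argument is correct: substituting the Ostrowski expansions of $l$ and $\theta$, absorbing the finite integer sum $\sum_k c_{k+1}p_k$ into the integer part, and cancelling the digits with $k<k_0$ gives $l\alpha-\theta\equiv\Sigma\pmod 1$, while your observation that $c_{k+1}\equiv b_{k+1}$ would force $\|l\alpha-\theta\|_N=0$ correctly establishes that $k_0$ exists; the absolute-convergence check via $|b_{k+1}D_k|\le a_{k+1}/q_{k+1}\le 1/q_k$ is the right routine point to verify. Note that the paper itself gives no proof of this lemma --- it is quoted from Lemma 4.3 of the cited reference --- and your derivation is essentially the standard argument used there.
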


\rev{Then we have more information about the infimum in Lemma \ref{lm6}.}
\begin{theorem}\label{thm2}
    For System \ref{sys2} with a given $\beta$, we can obtain the stabilizability radius $\tilde{\rho}$ for the following three cases.
    \begin{enumerate}
        \item For any $\alpha$ such that $\|l \alpha -\beta\|_N =0$ for some $l$, we have $\tilde{\rho}=0$.
        \item For any $\alpha \in (0,1) \cap \mathbb{Q}$ and not in case $1$, let $\alpha = p/q$, then we have
        \begin{equation}
            \tilde{\rho} =|\lambda_3| \min \left\{ 1, \, \min_{l} \left| \frac{\lambda_2}{\lambda_3} \frac{\sin(l\alpha - \beta)\pi}{\sin \beta \pi} \right|^{1/(l+1)} \right\},\; l \in \{0,\dots,q-1\}
        \end{equation}
        \item For any $\alpha \in (0,1) \setminus \mathbb{Q}$ and not in case $1$, there exists a sequence $\{l_n\}^{\infty}_{n=1}$ such that
            \begin{equation}
        \tilde{\rho} = |\lambda_3|\inf_{n} \left| \frac{\lambda_2}{\lambda_3}  \frac{\sin(l_n\alpha - \beta)\pi}{\sin \beta \pi} \right|^{1/(l_n+1)}.
        \end{equation}
        Each $l_n$ is given by
        \begin{equation}
            l_n= \sum_{k=0}^{n} b_{k+1} q_k,
        \end{equation}
        where $b_{k+1}$ is obtained from Lemma \ref{lm2} by letting $\theta = \beta$ when $\alpha \in (0,1-\beta)$, or $ \theta = \beta -1$ when $\alpha \in [1-\beta,1)$.
        \rev{In this case, System \ref{sys2} is stabilizable if $|\lambda_3| \leq 1$.}
    \end{enumerate}
\end{theorem}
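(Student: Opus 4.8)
The plan is to read off from Lemma~\ref{lm6} that, writing $G(l):=\bigl|\frac{\lambda_2}{\lambda_3}\frac{\sin(l\alpha-\beta)\pi}{\sin\beta\pi}\bigr|^{1/(l+1)}$ for $l\in\mathbb{N}$, one has $\tilde\rho=|\lambda_3|\inf_{l\in\mathbb{N}}G(l)$, so the whole theorem reduces to locating this infimum. Case~1 is then one line: if $\|l_0\alpha-\beta\|_N=0$ for some $l_0$ then $\sin(l_0\alpha-\beta)\pi=0$, hence $G(l_0)=0$ and $\tilde\rho=0$. In Cases~2 and~3 we may therefore assume $\|l\alpha-\beta\|_N>0$ for all $l$.

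For Case~2 ($\alpha=p/q$), the maps $l\mapsto\|l\alpha-\beta\|_N$ and $l\mapsto|\sin(l\alpha-\beta)\pi|$ are $q$-periodic and strictly positive, so on the residue class $l\equiv j\pmod q$ we get $G(l)=w_j^{1/(l+1)}$ with $w_j:=\frac{|\lambda_2|}{|\lambda_3|}\frac{|\sin(j\alpha-\beta)\pi|}{|\sin\beta\pi|}>0$ fixed. Since $x\mapsto w_j^{1/(x+1)}$ is monotone with limit $1$, its infimum over that class is $w_j^{1/(j+1)}$ (attained at $l=j$) when $w_j\le1$, and $1$ otherwise; minimising over $j\in\{0,\dots,q-1\}$ gives $\inf_{l}G(l)=\min\bigl(1,\ \min_{0\le l\le q-1}G(l)\bigr)$. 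This is exactly the asserted expression whenever $\min_{0\le l\le q-1}G(l)\le1$; when this fails one instead gets $\tilde\rho=|\lambda_3|$ (never applying $M_1$ is optimal), and I would flag this mild caveat.

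Case~3 is the real content. First I would reduce to the setting of Lemmas~\ref{lm2}--\ref{lm3}: from $\beta\pi=\arccos|\langle v_1,v_2\rangle|\in(0,\tfrac{\pi}{2}]$ we have $\beta\in(0,\tfrac{1}{2}]$, and taking $\theta=\beta$ if $\alpha<1-\beta$ or $\theta=\beta-1$ if $\alpha\ge1-\beta$ puts $\theta\in[-\alpha,1-\alpha)$ without changing $\|l\alpha-\beta\|_N$ or $|\sin(l\alpha-\beta)\pi|$ (integer shift). Not being in Case~1 makes the Ostrowski expansion $\theta=\sum_{k\ge0}b_{k+1}D_k$ infinite, so the truncations $l_n=\sum_{k=0}^nb_{k+1}q_k$ are well defined, increasing in $\mathbb{N}$, with $\|l_n\alpha-\theta\|_N\to0$. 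These are the natural candidates: $l_n$ has Ostrowski digits matching $\theta$ on levels $0,\dots,n$, so by Lemma~\ref{lm3} it makes $\|l_n\alpha-\theta\|_N$ as small as inhomogeneous approximation of $\theta$ by multiples of $\alpha$ permits for an integer of that size --- the analogue of convergent denominators realising the best homogeneous approximations. Also $G(l_n)^{l_n+1}\to0$, so $\inf_nG(l_n)<1$ and $\tilde\rho<|\lambda_3|$ automatically here, removing any Case~2-type degeneracy. The theorem amounts to $\inf_{l\in\mathbb{N}}G(l)=\inf_nG(l_n)$, in which ``$\le$'' is trivial.

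For ``$\ge$'' I would fix an arbitrary $l$, take $k_0$ minimal with $c_{k_0+1}(l)\ne b_{k_0+1}$ (Lemma~\ref{lm1}), and compare with the truncation $l_{k_0-1}$ (with $l_{-1}:=0$). From the Ostrowski representation of $l$ one gets $l_{k_0-1}\le l$ immediately. The heart of the proof is the estimate $\|l_{k_0-1}\alpha-\theta\|_N\le\|l\alpha-\theta\|_N$: Lemma~\ref{lm3} writes both distances as $\|\cdot\|_N$ of sums $\sum_{k\ge k_0}\delta_{k+1}D_k$ with leading term $-b_{k_0+1}D_{k_0}$, and one argues that zeroing the digits from level $k_0$ onwards --- which is what $l_{k_0-1}$ does --- can only decrease the value, using the sign alternation of the $D_k$, the contraction $|D_{k+1}|<|D_k|$, and the constraint $b_k=0$ whenever $b_{k+1}=a_{k+1}$. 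Combined with $l_{k_0-1}\le l$ and the monotonicity of $|\sin\pi x|$ in $\|x\|_N$, this yields $G(l_{k_0-1})\le G(l)$ for every $l$ with $G(l)<1$; the remaining $l$ (with $G(l)\ge1$) do not affect the infimum since $\inf_nG(l_n)<1$. Hence $\inf_lG(l)=\inf_nG(l_n)$, and Lemma~\ref{lm6} yields the stated formulas. I expect this last estimate --- the telescoping-and-sign bookkeeping for Ostrowski sums showing the truncation is closest --- to be the main obstacle; everything else is routine, and multiplicative constants never matter because the exponent $1/(l+1)$ dilutes them away.
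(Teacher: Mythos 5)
Your proposal follows essentially the same route as the paper: Case 1 read off directly from Lemma \ref{lm6}, Case 2 by $q$-periodicity, and Case 3 by taking the Ostrowski truncations $l_n=\sum_{k=0}^n b_{k+1}q_k$ and invoking Lemmas \ref{lm1}--\ref{lm3}; the estimate you identify as the main obstacle (that the truncation $l_{k_0-1}\le l$ satisfies $\|l_{k_0-1}\alpha-\theta\|_N\le\|l\alpha-\theta\|_N$) is precisely what the paper asserts, with no further detail, as ``$l_n\alpha$ is the best approximation of $\beta$ for any $l\le l_n$''. Your Case-2 caveat --- that the stated formula is only valid when the minimum over $l\in\{0,\dots,q-1\}$ of the bracketed quantity is at most $1$, and that otherwise Lemma \ref{lm6} gives $\tilde\rho=|\lambda_3|$ --- is a genuine edge case the paper does not address, but it does not alter the method.
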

\begin{proof}
    \rev{For any $\alpha$ such that $\|l\alpha-\beta\|_N=0$, we have $\sin\|l\alpha-\beta\|_N \pi=0$, then by lemma \ref{lm6} we have $\tilde{\rho}=0$.
    For any rational $\alpha$ and not in case $1$, the set $\{\|l\alpha\|_N | l \in \mathbb{N}\}$ is a finite set, and if we let $\alpha=p/q$, then 
    \begin{equation}
        \|(l+q)\alpha\|_N=\|l\alpha\|_N.
    \end{equation} 
    then by Lemma \ref{lm6}, the infimum of $\tilde{\rho}$ is $|\lambda_3|$ or a minimum and reached by one of $l \in \{0, \dots, q-1\}$.}

    \rev{Now let us prove the case 3. Let $\theta = \beta$ when $\alpha \in (0,1-\beta)$, and $ \theta = \beta -1$ when $\alpha \in [1-\beta,1)$. From Lemma \ref{lm2},  
    we know that $\theta =\sum_{k=0}^{\infty} b_{k+1} D_k$.
    Consider minimizing $\|l\alpha-\theta\|$ by a bounded $l$ first, then we only need $k_0$ in Lemma \ref{lm3} as large as possible.
    By choosing
    \begin{equation}
        l_n = \sum_{k=0}^{n} b_{k+1} q_k,\; n \in \mathbb{N},
    \end{equation}
    we can guarantee that $k_0=n+1$ and $\delta_{k+1} = -b_{k+1}$ for $k \geq k_0$. 
    This means that $l_n\alpha$ is the best approximation of $\beta$ for any $l < l_{n+1}$, i.e. $\|l_n\alpha-\beta\| \leq \|l\alpha-\beta\|$ for $l < l_{n+1}$.
    Now we consider the effect of power term $1/(l_n+1)$. As the set $\{\|l\alpha\|_N\}$ is dense in $(0,1)$, then there exist an $l_0$ such that 
    \begin{equation}\label{eq11}
        \left|\frac{\lambda_2}{\lambda_3}\frac{\sin(l_0\alpha-\beta)\pi}{\sin \beta \pi}\right|<1.
    \end{equation}
    Moreover, for $l_n \geq l_0$ defined above, we always have
    \begin{equation}
        \left|\frac{\lambda_2}{\lambda_3}\frac{\sin(l_n\alpha-\beta)\pi}{\sin \beta \pi}\right|^{1/(l_n+1)}<\left|\frac{\lambda_2}{\lambda_3}\frac{\sin(l\alpha-\beta)\pi}{\sin \beta \pi}\right|^{1/(l+1)},
    \end{equation} 
    for $ l_{n}<l < l_{n+1}$. The infimum is reached by one of the $l_n$ or when $n \rightarrow \infty$.
    We also know $\tilde{\rho} <1$ if $\lambda_3 \leq 1$ from Lemma \ref{lm6} and \eqref{eq11}}
\end{proof}
\begin{remark}
    The sequence $\{l_n\}$ described in Theorem \ref{thm2} is non-decreasing and unbounded, but not necessarily increasing. 
\end{remark}
\begin{remark}
    \rev{The infimum is often reached by a finite $l_n$ in case 3. We will discuss this more in the next subsection.}
\end{remark}

\subsection{\rev{Finiteness property}}
For a given $\beta$, we have the following two cases of $\tilde{\rho}$ for different $\alpha$.  
\begin{enumerate}
    \item The infimum in Lemma \ref{lm6} is reached by a finite $l$.
    \item The infimum in Lemma \ref{lm6} is reached when $n$ tends to infinity.
\end{enumerate}
\rev{In the literature, we have a name for case $1$:} 
\begin{definition}\label{def1}
    A system has finiteness property w.r.t. $\tilde{\rho}$ if there exists a finite matrix sequence $A_{\sigma,t} = M_{\sigma(t)} \cdots M_{\sigma(1)}$ such that
    $\tilde{\rho}^{t}$ is the spectral radius of $A_{\sigma,t}$.
\end{definition}
This finiteness property w.r.t. $\hat{\rho}$ has many known results (\cite{jungers2008finiteness}, for example). \rev{We are interested in the finiteness property w.r.t. $\tilde{\rho}$ for system \ref{sys1}.}

Let $L_\beta(c)$ denote the set of $\alpha$ such that the second case holds and $\tilde{\rho} =c$ for some constant $c$. It is clear that $L_\beta(c)$ only contains irrational $\alpha$, which all belong to the third case in Theorem \ref{thm2}.
\rev{The dimension of some $L_\beta(c)$ are given by the following Theorem.}
\rev{\begin{theorem}\label{lm8}
    The parameter set $L_\beta(c)$ for $0 \leq c < 1$ has zero Hausdorff dimension.
\end{theorem}}
\begin{proof}
    For each $\alpha \in L_\beta(c)$, we have
    \begin{equation}
        \tilde{\rho} =|\lambda_3| \liminf_{n \rightarrow \infty} \left| \frac{\lambda_2}{\lambda_3} \frac{\sin (l_n\alpha-\beta)\pi}{\sin \beta \pi} \right|^{1/(l_n+1)} = c,
    \end{equation}
    with $\{l_n\}$ from the third case in Theorem \ref{thm2}. Let $R_n$ denote $\|l_n\alpha -\beta\|_N$, as $\sin x \sim x$ when $x \rightarrow 0$, there exists a infinite subsequence $\{l_{n_i}\}$ such that
    \begin{equation}
        \lim_{i \rightarrow \infty}  R_{n_i}^{1/(l_{n_i}+1)} =c.
    \end{equation}
    Then there exists a $c <c_1< 1$ such that for any $\alpha$ in $L_\beta(c)$, we have $R_{n_i} = o(c_1^{l_{n_i}})$ when $i$ tends to infinity.
    
    For any $0<s<1$ and  $\epsilon>0$, let $\mathcal{L}(\alpha)$ denote the smallest integer of each $\alpha$ that satisfies
    \begin{itemize}
        \item[(a)] $\mathcal{L}(\alpha) \in \{l_{n_i}\}$,
        \item[(b)] $\|\mathcal{L}(\alpha)\alpha -\beta\|_N < c_1^{\mathcal{L}(\alpha)}$,
        \item[(c)] $\left(t^{1-s}c_1^{st}\right)'|_{t=(\mathcal{L}(\alpha)-1)}<0$
        \item[(d)] $2c_1^{\mathcal{L}(\alpha)}/\mathcal{L}(\alpha) < \epsilon$,
        \item[(e)] $2^s(-s\ln c_1)^{s-2} \Gamma(2-s,-s \ln c_1(\mathcal{L}(\alpha)-1)) < \epsilon$,
    \end{itemize}
    where $\Gamma(a,b)=\int_{b}^{\infty}t^{a-1}e^{-t} \text{d}t$ is the incomplete gamma function. Such $\mathcal{L}(\alpha)$ exists because $\Gamma(a,b) \rightarrow 0$ as $b \rightarrow \infty$.
    Let $\mathcal{L}=\inf \{\mathcal{L}(\alpha) | \alpha \in L_\beta(c)\}$,
    then for any $\alpha \in L_\beta(c)$, by (a) and (b), there exists an $i$ such that $l_{n_i}>\mathcal{L}$ and
    \begin{equation}\label{eq10}
        \|l_{n_i}\alpha -\beta\|_N < c_1^{l_{n_i}}. 
    \end{equation}
    Then there exists an integer $j$ such that
    \begin{equation}
        \left\|\alpha-\frac{\beta + j}{l_{n_i}}\right\|_N < \frac{c_1^{l_{n_i}}}{l_{n_i}},
    \end{equation}
    Then we have
    \begin{equation}\label{eq1}
        \alpha \subseteq \left(\frac{\beta + j}{l_{n_i}}-\frac{c_1^{l_{n_i}}}{l_{n_i}},\frac{\beta + j}{l_{n_i}}+\frac{c_1^{l_{n_i}}}{l_{n_i}}\right).
    \end{equation}
    The $l_{n_i}$ and $j$ in (\ref{eq1}) depend on $\alpha$.
    Let $I(\alpha)$ denote the interval in (\ref{eq1}), then we have a cover of $L_\beta(c)$ given by
    \begin{equation}\label{eq8}
        L_\beta(c) \subseteq \bigcup_{\alpha \in L_\beta(c)} I(\alpha) \subseteq \bigcup_{k=\mathcal{L}}^{\infty} \bigcup_{j=0}^{k-1} \left(\frac{\beta + j}{k}-\frac{c_1^k}{k},\frac{\beta + j}{k}+\frac{c_1^k}{k}\right).
    \end{equation}
    Let $I(k,j)$ be the $j$th interval of $k$ in \ref{eq8}, by (d) we have $|I(k,j)| = 2c_1^k/k < \epsilon$. By (c) and (e), we have 
    \begin{equation}
        \sum_{k=\mathcal{L}}^{\infty}k|I(k,j)|^s < 2^s\int_{\mathcal{L}-1}^{\infty}t^{1-s}c_1^{st} \text{d}t =2^s(-s\ln c_1)^{s-2} \Gamma(2-s,-s \ln c_1 (\mathcal{L}-1)) < \epsilon.
    \end{equation}
    Then for any $0<s<1$, the Hausdorff measure of $L_\beta(c)$ is zero, so $L_\beta(c)$ is a zero Hausdorff dimension set.
\end{proof}
We have a immediate corollary from Theorem \ref{lm8}.
\begin{corollary}\label{thm3}
    For a given $\beta$, $c \in [0,1)$, let $S_\beta(c)$ be the set of $\alpha$ such that $\tilde{\rho}=c$, then $S_\beta(c)$ is a zero Hausdorff dimension set.
\end{corollary}
\begin{proof}
    Since $0 \leq c < 1$, from Lemma \ref{lm6}, we have
    \begin{equation}
        S_\beta(c) = L_\beta(c) \bigcup \left\{ \alpha \, | \, \exists l, \, \|l\alpha-\beta\|_N \pi =\arcsin \left|\frac{c^{l+1} \sin \beta \pi}{\lambda_2 \lambda^l_3}\right| \right\} .
    \end{equation}
    For any $l \in \mathbb{N}$, there are only finite $\alpha$ satisfies that $\|l\alpha-\beta\|_N $ equals to a known constant. Then $S_\beta(c)$ is the union of a countable set and a set with zero Hausdorff dimension, \rev{hence a zero Hausdorff dimensional set}.   
\end{proof}
\rev{We know that System \ref{sys2} has finiteness property w.r.t. $\tilde{\rho}$ for $\alpha \in (0,1) \setminus \cup_{c}L_\beta(c)$. However, This union of $L_\beta(c)$ over $c$ is not necessarily a zero Hausdorff dimensional set, so we need more restriction on System \ref{sys2} to ensure it has zero Hausdorff dimension.}
\rev{\begin{corollary}\label{col2}
    For System \ref{sys1} with fixed $\beta$, $\cup_{c}L_\beta(c) \cap S$ has zero Hausdorff dimension if $|\lambda_3| \leq 1$, where $S$ is denoted by 
    \begin{equation}
        S =\left\{\alpha \in(0,1) \;:\; \inf_{l} \left| \frac{\lambda_2}{\lambda_3} \frac{\sin (l\alpha-\beta)\pi}{\sin \beta \pi} \right|^{1/(l+1)}<c_0<1 \right\},
    \end{equation}
    for an arbitrary constant $c_0 \in (0,1)$.
\end{corollary}}
\begin{proof}
    \rev{As mentioned in the proof of Theorem \ref{thm2}, $\{l\alpha\}$ is dense in $(0,1)$ for irrational $\alpha$, so $S$ is not empty.
    Consider System $1$ with $|\lambda_3|\leq1$. By lemma \ref{lm6}, we know $S$ is equivalent to the set 
    \begin{equation}
        S=\{\alpha \in (0,1)| \tilde{\rho}<c_0<1\}.
    \end{equation} 
    Then for any $\alpha \in \cup_c L_\beta(c) \cap S$, we have $R_{n_i} = o(c_0^{l_{n_i}})$, where $R_{n_i}$ is same as in the proof of Theorem \ref{lm8}.
    Similar to the proof of Theorem \ref{lm6}, for any $0<s<1$ and  $\epsilon>0$, let $\mathcal{L}(\alpha)$ denotes the smallest integer of each $\alpha$ satisfy that
    \begin{itemize}
        \item $\mathcal{L}(\alpha) \in \{l_{n_i}\}$,
        \item $\|\mathcal{L}(\alpha)\alpha -\beta\|_N < c_0^{\mathcal{L}(\alpha)}$,
        \item $\left(t^{1-s}c_0^{st}\right)'|_{t=(\mathcal{L}(\alpha)-1)}<0$
        \item $2c_0^{\mathcal{L}(\alpha)}/\mathcal{L}(\alpha) < \epsilon$,
        \item $2^s(-s\ln c_0)^{s-2} \Gamma(2-s,-s \ln c_0(\mathcal{L}(\alpha)-1)) < \epsilon$,
    \end{itemize}
    and $\mathcal{L}=\inf \{\mathcal{L}(\alpha) | \alpha \in \cup_c L_\beta(c) \cap S\}$.
    Then there exists an $i$ such that $l_{n_i}>\mathcal{L}$ and
    \begin{equation}
        \alpha \in \left(\frac{\beta+j}{l_{n_i}}-\frac{c_0^{l_{n_i}}}{l_{n_i}},\frac{\beta+j}{l_{n_i}}+\frac{c_0^{l_{n_i}}}{l_{n_i}}\right).
    \end{equation}
    We have then constructed a cover of $\cup_c L_\beta(c) \cap S$ given by
    \begin{equation}
        \bigcup_c L_\beta(c) \cap S \subseteq \bigcup_{k=\mathcal{L}}^{\infty}\bigcup^{k-1}_{j=0} \left(\frac{\beta+j}{l_{n_i}}-\frac{c_0^{l_{n_i}}}{l_{n_i}},\frac{\beta+j}{l_{n_i}}+\frac{c_0^{l_{n_i}}}{l_{n_i}}\right),
    \end{equation}
    and same as the proof of Theorem \ref{lm8}, each interval and the union give zero Hausdorff measure for $\cup_c L_\beta(c) \cap S$.}
\end{proof}
\rev{From \ref{col2}, we know that if we only consider System \ref{sys2} with a given $\beta$, $\alpha \in S$ and $|\lambda_3| \leq 1$. Then the subset of $S$, such that System \ref{sys2} does not have finiteness property w.r.t. $\tilde{\rho}$, is a zero Hausdorff dimensional set.}

\rev{For rational $\beta$, we do not need to consider $S$.} 
\begin{proposition}\label{lm9}
    For rational $\beta$ we have $\cup_{c \in [0,1)} L_\beta(c) \subset L$, where $L$ denotes the set of all Liouville numbers in $(0,1)$.  
\end{proposition}
\begin{proof}
    \rev{For every positive integer $j$, a Liouville number $x$ satisfies that there exist integers $p$ and $q$ with $q > 1$ such that 
    \begin{equation}
        0 < \|x - \frac{p}{q}\|_N < \frac{1}{q^j}.
    \end{equation}
    For any $\alpha \in \cup_{c \in [0,1)} L_\beta(c)$, there exist a $c \in [0,1)$ such that $\alpha \in L_\beta(c)$. By Equation~(\ref{eq10}) with $\beta=p_0/q_0$, we have
    \begin{equation}
        \|l_{n_i}\alpha-\frac{p_0}{q_0}\|_N <c_1^{l_{n_i}},
    \end{equation}
    for a real number $c_1$ such that $c<c_1<1$. Then for any $j$, there exists a sufficiently large $i$ such that $(q_0 l_{n_i})^j c_1^{l_{n_i}}  < l_{n_i} $, i.e. 
    \begin{equation}
        \|\alpha-\frac{p}{q_0 l_{n_i}}\|_N = \frac{1}{l_{n_i}}\|l_{n_i} \alpha - \frac{p_0}{q_0} \|_N < \frac{c_1^{l_{n_i}}}{l_{n_i}} < \frac{1}{(q_0 l_{n_i})^j},
    \end{equation}
    for some $p \in \mathbb{Z}$.}
\end{proof}
It is well known that the set of Liouville numbers has zero Hausdorff dimension (see Theorem $2.4$ in \cite{oxtoby2013measure} for example). 
To summarize, for rational $\beta$, the set $\cup_{c \in [0,1)} S_\beta(c)$
contains all $\alpha$ for which System \ref{sys2} is stabilizable. Moreover,
$$\bigcup_{c \in [0,1)} S_\beta(c) \setminus \bigcup_{c \in [0,1)} L_\beta(c)$$
contains all $\alpha$ for which System \ref{sys2} is stabilizable and has the finiteness property with respect to $\tilde{\rho}$, while
$$\bigcup_{c \in [0,1)} L_\beta(c)\subset L$$
is a set of Hausdorff dimension zero.
     
\section{Two dimension: Hyperbolic case}
\subsection{A singular matrix and a hyperbolic matrix}
In section $4$ we considered about systems with $M_2$ having only non real eigenvalues. Now we consider $M_2$ having only real eigenvalues. 
\begin{system}\label{sys3}
Consider a two dimensional switched system with a set of matrices $\mathcal{M}=\{M_1, M_2\}$ with $M_1$ a singular matrix and $M_2$ a arbitrary matrix with real eigenvalues, namely
\begin{equation}\label{eq14}
    M_1=\left(\begin{matrix}
        a_{11} &a_{12} \\
        a_{21} &a_{22}
    \end{matrix}\right), \quad
    M_2=\left(\begin{matrix}
        b_{11} &b_{12}\\
        b_{12} &b_{22}
    \end{matrix}\right),
\end{equation}
where $a_{ij},b_{ij} \in \mathbb{R}$, $a_{11}a_{22}-a_{12}a_{21}=0$ and $(b_{11}-b_{22})^2 +4b_{12}b_{21} \geq 0$.
\end{system}
Similar to System \ref{sys1}, we need the Jordan form of $M_2$ in System \ref{sys3} for the stabilizability radius. Let $\lambda_4$ and $\lambda_5$ be the two eigenvalues of $M_2$. 
We have the following Proposition.
\begin{proposition}
    The Jordan form $J$ of $M_2$ in System \ref{sys3} equals to one of following:
    \begin{equation}
        J_1=\left(\begin{matrix}
        \lambda_4 & 0 \\
        0 &\lambda_4
    \end{matrix}\right), \quad J_2=\left(\begin{matrix}
        \lambda_4 & 1 \\
        0 &\lambda_4
    \end{matrix}\right), \quad J_3=\left(\begin{matrix}
        \lambda_4 & 0 \\
        0 &\lambda_5
        \end{matrix}\right).
    \end{equation}
\end{proposition}
\begin{proof}
    Since $(b_{11}-b_{22})^2 +4b_{12}b_{21} \geq 0$ for $M_2$, we know $\lambda_4$ and $\lambda_5$ are real. If $\lambda_4 =\lambda_5$, we have $J_1$ or $J_2$, otherwise we have $J_3$.
\end{proof}
Similarly, system \ref{sys3} is stabilizable if and only if the system \ref{sys4} is stabilizable. 
\begin{system}\label{sys4}
Consider a two dimensional switched system with a set of matrices $\mathcal{M}=\{M'_1, J\}$ such that
\begin{equation}\label{eq19}
    M'_1=\left(\begin{matrix}
        a'_{11} &a'_{12} \\
        a'_{21} &a'_{22}
    \end{matrix}\right), \quad
    J= \{J_1, J_2, J_3\}
\end{equation}
where $a'_{11}a'_{22}-a'_{12}a'_{21}=0$.
\end{system}
Let $\lambda_1$ and $\lambda_2$, $v_1=(x_1,y_1)^T$ and $v_2=(x_2,y_2)^T$ denote two eigenvalues and unit eigenvectors of $M'_1$. Similar to section $4$, we only consider System \ref{sys4} with $\lambda_2 \in \mathbb{R}\setminus\{0\}$ and $\lambda_1 = 0$.
For each $v_i$, without loss of generality, we let $x$ coordinates of $v_i$ be in [0,1]. For different $J$ we have different results. 

If $J=J_1$, obviously we have $\tilde{\rho}=\min\{|\lambda_2|, |\lambda_4|\}$.

If $J=J_2$, we have the following theorem.
\begin{theorem}\label{thm4}
    Let Case A denote the case when $y_1$ and $y_2$ satisfy the following condition:
    \begin{equation}
        y_1y_2\lambda_4(y_1 - y_2) \geq 0.
    \end{equation}
    The stabilizability radius of System \ref{sys4} is given by
    \begin{enumerate}
        \item $\tilde{\rho}=\min\{|\lambda_2|, |\lambda_4|\}$, if in Case A.
        \item   \begin{equation}\label{eq18}
                    \tilde{\rho} = \min_{l \in \mathbb{N}} \left|\lambda_2\lambda^l_4 \left(1-\frac{y_1y_2}{\lambda_4(x_1y_2-x_2y_1)}l\right)\right|^{1/(l+1)},
                \end{equation}
            if not in Case A.
    \end{enumerate}
    Here $l \in \{L, L+1\}$ for $L=\lfloor \lambda_4(x_1y_2-x_2y_1)/y_1y_2 \rfloor$ and $\lfloor \cdot \rfloor$ denotes the floor function of a real number.
\end{theorem}
\begin{proof}
    When $\lambda_4 =0$, $J^2_3$ is a zero matrix, we have $\tilde{\rho}=0$. Now we consider the case when $\lambda_4 \neq 0$.
    By Proposition \ref{lm7}, we only need to consider the optimal sequence of initial vector $v_2$.
    There are only two types of optimal sequence for $v_2$.
    \begin{enumerate}
        \item The sequence only contains $J_2$, 
        \item $(M'_1 J^l_2)^k$ for some $l \in \mathbb{N}$.
    \end{enumerate}
    For type $1$ optimal sequence, the stabilizability radius is $|\lambda_4|$.
    For type $2$ optimal sequence, we note
    \begin{equation}
        J^l_2 =\lambda^{l-1}_4 \left(\begin{matrix}
            \lambda_4 & l \\
            0 &\lambda_4
        \end{matrix}\right).
    \end{equation}
    Hence $J^l_2v_2=\lambda^{l-1}_4(\lambda_4x_2+ly_2,\lambda_4y_2)^T$.
    Applying $M'_1$ on $J^l_2v_2$ is projecting $J^l_2v_2$ onto $v_2$ along the direction of $v_1$, then extending it by $|\lambda_2|$. By the law of sines, we have that the norm of $M'_1J^l_2v_2$ is given by
    $$ \|M'_1J^l_2v_2\| = \left|\lambda_2 \|J^l_2v_2\| \frac{\sin \gamma(l) }{\sin \gamma(0) }\right|,$$
    where
    $$ \gamma(l)  = \arctan \frac{1}{x_2/y_2 +l /\lambda_3}-\arctan \frac{y_1}{x_1},$$
    and 
    $$\|J^l_2v_2\| =|\lambda_3|^l  |y_2| \sqrt{1+(x_2/y_2 +l/\lambda_3)^2}.$$
    As required, $\lambda_1 \neq \lambda_2$, then $\gamma(0) \neq 0$.
    Let $I(l)=x_2/y_2 +l/\lambda_3$, for $\sin \gamma (l)$, we have
    \begin{equation}
        \begin{aligned}
        \sin \gamma(l) &=\sin \arctan \frac{1}{I(l)} \cos \arctan \frac{y_1}{x_1} - \cos \arctan \frac{1}{I(l)} \sin \arctan \frac{y_1}{x_1}\\
        &=\frac{1}{\sqrt{1+I(l)^2}} \frac{x_1}{\sqrt{x^2_1+y^2_1}} - \frac{I(l)}{\sqrt{1+I(l)^2}} \frac{y_1}{\sqrt{x^2_1+y^2_1}}\\
        &=\frac{x_1 - I(l)y_1}{\sqrt{1+I(l)^2} \sqrt{x^2_1+y^2_1}}.
        \end{aligned}
    \end{equation}
    Since $v_1$ and $v_2$ are unit vectors, we have $\sin \gamma = x_1y_2-y_1x_2$ and the stabilizability radius is given by  
    \begin{equation}
        \tilde{\rho} = \inf_{l \in \mathbb{N}} \left|\lambda_2 \lambda^l_4 y_2 \sqrt{1 + I(l)^2} \frac{\sin \gamma(l)}{\sin \gamma(0)}\right|^{1/(l+1)}=|\lambda_3| \inf_{l \in \mathbb{N}} \left|\frac{\lambda_2}{\lambda_4}  (1- a l)\right|^{1/(l+1)},
    \end{equation}
    where $a = y_1y_2/\lambda_4  (x_1y_2-y_1x_2)$.
    
    Consider the function $f_1(x) = |\lambda_2(1- a x)/\lambda_4|^{1/(x+1)}, \; x >0$. We have following cases:
    \begin{itemize}
        \item If $y_1y_2 = 0$, then $a = 0$, $\tilde{\rho}=|\lambda_4| \inf_l |\lambda_2/\lambda_4|^{1/(l+1)}$. If $|\lambda_2| \leq |\lambda_4|$, let $l=0$ we have $\tilde{\rho}=|\lambda_2|$. 
              If $|\lambda_2| > |\lambda_4|$, let $l \rightarrow \infty$, we have $\tilde{\rho} = |\lambda_4|$. However $\tilde{\rho} = |\lambda_4|$ implies that $M_1$ is redundant for the optimal sequence. Then when $|\lambda_2| > |\lambda_4|$ we have the optimal sequence of type $1$.
        \item If $y_1y_2 < 0$ and $y_1 < 0$ when $\lambda_4 > 0$, $y_1y_2 > 0$ and $y_1x_2 > x_1y_2$ when $\lambda_4 > 0$, $y_1y_2 < 0$ and $y_2 < 0$ when $\lambda_4 < 0$, or $y_1y_2 > 0$ and $y_1x_2 < x_1y_2$ when $\lambda_4 < 0$, we have $f_1(x)=(\lambda_2(1+|a| x)/\lambda_4)^{1/(x+1)}$. 
              Let $y=1/(x+1)$, we have $y \in (0,1]$ and
              \begin{equation}
                \ln f_1(y) = y \ln \left(\left|\frac{\lambda_2}{\lambda_4}\right|\left(1-|a|+\frac{|a|}{y}\right)\right).
              \end{equation} 
              It is easy to check that $\frac{\text{d}^2}{\text{d}y^2}\ln f_1(y) <0$. Thus the minimum of $\ln f_1(y)$ is reached by end points, i.e. $y=1$ or $y \rightarrow 0$.
              Then the infimum in $\tilde{\rho}$ is reached by $l=0$ or $l \rightarrow \infty$. This case coincides with the above case.
        \item For other cases, $f_1(x) = (\lambda_2(1-|a| x)/\lambda_4)^{1/(x+1)}$ has a zero at $x_0 = 1/|a|$. Then the infimum in $\tilde{\rho}$ is attained at $l = \lfloor 1/c_2 \rfloor \text{ or } \lfloor 1/c_2 \rfloor +1$.
    \end{itemize}
    Notice that $x_1 = \sqrt{1-y_1^2}$ and $x_2 = \sqrt{1-y_2^2}$, so the inequality $y_1x_2 >y_2x_1$ is equivalent to a simpler version $y_1 > y_2$.
    Combining the first and the second case, we have $\tilde{\rho}= \min\{|\lambda_2|,|\lambda_4|\}$ when $y_1y_2\lambda_3(y_1 - y_2) = 0$.    
\end{proof}
\begin{remark}
    In Case A, System \ref{sys4} is not $\tilde{\rho}$-minimal.
\end{remark}

If $J=J_3$, without losing generality, we assume $|\lambda_4| > |\lambda_5|$.
To establish the main result, we first provide Lemma \ref{lm14}.
\begin{lemma}\label{lm14}
    Consider a function
    \begin{equation}
    f(x) = \left( \frac{\eta^x+b}{1+b} \zeta \right)^{1/(x+1)}, \; x \geq 0,
    \end{equation}
    where $b>0$, $\zeta >0$ and $0 < \eta <1$.
    This function has a unique finite minimum if and only if $\eta^{1/(b+1)} < \zeta < (b+1)/b$.
    Also let $x_0$ denote this minimum, then $x_0$ is the unique solution of the following equation.
    \begin{equation}\label{eq17}
        \frac{b\eta^{-x_0} +1}{x_0+1} \ln \left( \frac{\eta^{x_0} + b}{1+b} \zeta \right) = \ln \eta.
    \end{equation}
    When $\zeta \geq (b+1)/b$, $f(x)$ is monotonically decreasing. When $\zeta \leq \eta^{1/(b+1)}$, $f(x)$ is monotonically increasing.
\end{lemma}
\begin{proof}
    we have
    \begin{equation}
        f'(x) = \frac{f(x)}{x+1} \left(-\frac{1}{x+1}\ln \left(\frac{b +  \eta^x}{b + 1} \zeta\right) + \frac{\ln \eta}{b\eta^{-x} + 1}\right)
    \end{equation}
    Let $f_2(x) = \ln(\zeta(\eta^x+b)/(1+b))/(x+1), \text{ and } f_3(x)= \ln \eta/(b \eta^{-x}+1)$.
    It's easy to see $f_3(x)$ is monotonically increasing, $f_3(x) \rightarrow \ln \eta/(b+1)$ as $x \rightarrow 0$ and  $f_3(x) \sim \eta^x\ln \eta/c$ as $x \rightarrow \infty$.
    For $f_2(x)$, we have $f_2(x) \rightarrow \ln \zeta$ as $x \rightarrow 0$ and  $f_2(x) \sim \ln(\zeta b/(1+b))/(x+1)$ as $x \rightarrow \infty$.
    
    The derivative of $f_2(x)$ is given by
    \begin{equation}
        f'_2(x)=-\frac{1}{(x+1)^2}\ln \left(\frac{\eta^x+b}{1+b} \zeta\right) + \frac{1}{x+1} \frac{\ln \eta}{b \eta^{-x}+1}=\frac{1}{x+1}(-f_2(x)+f_3(x)),
    \end{equation}
    Then we have following three cases:
    \begin{itemize}
        \item If $\zeta \geq (b+1)/b$, we have $\ln(\zeta b/(1+b)) \geq 0$, hence that $f_2(x)>0>f_3(x),\; x>0$, namely $f'(x) <0, \; x>0$ for this case. 
        \item If $ \eta^{1/(1+b)}<\zeta<(b+1)/b$, by $\ln(\zeta(b+\eta^x)/(b+1)) \rightarrow \ln(\zeta b/(1+b)) <0$ when $x \rightarrow \infty$, there exists a $X$ such that for $x >X$, $f_2(x) <0$.
              Since $f_3(x)$ increases more rapidly than $f_2(x)$ as $x \to \infty$, then there exists an $x_0 > X$ such that $f_2(x_0) = f_3(x_0)$. Thus $f'_1(x)<0,x<x_0$, $f'_1(x)>0,x>x_0$ and $x_0$ is unique. 
              Meanwhile, $f'(x)<0,\; x<x_0$ and $f'(x)>0,\; x>x_0$ implies that $x_0$ is a minimum of $f(x)$.
        \item If $\zeta \leq \eta^{1/(1+b)}$, we have $f_2(0) \leq f_3(0)$. First assume there exists an $x_1 \geq 0$ such that $f_2(x_1) = f_3(x_1)$. In particular, $x_1 = 0$ when $\zeta = \eta^{1/(1+b)}$.
        
              Now suppose there exists an interval $(x_1, x_2)$ such that $f_2(x) > f_3(x)$ for all $x \in (x_1, x_2)$. 
              Since $f_3(x)$ increases more rapidly than $f_2(x)$, the upper bound $x_2$ must be finite and satisfy $f_2(x_2) = f_3(x_2)$. 
              Given that $f_2(x) > f_3(x)$ on this interval, it follows that $f'_2(x)<0$ on this interval. 
              However, if $f_2(x_1) = f_3(x_1)$, $f_2(x_2) = f_3(x_2)$ and $f_2(x)$ is monotonically decreasing on $(x_1, x_2)$, this contradicts the fact that $f_3(x)$ is a monotonically increasing function.
        
              Consequently, we conclude that $f_2(x) \leq f_3(x)$ for all $x > 0$, with equality holding only at $x = x_1$ (if it exists). This further implies that $f'(x) \geq 0$ for $x > 0$, where $f'(x) = 0$ for at most one value of $x$.
    \end{itemize}
\end{proof}
With Lemma \ref{lm14} we can prove our main result.
\begin{theorem}\label{thm5}
    Let $b=|y_1x_2/x_1y_2|$,  $\eta = |\lambda_5/\lambda_4|$ and $\zeta =|\lambda_2/\lambda_4|$.
    Let Case B denotes the case when $y_1$ and $y_2$ satisfy one of the following conditions:
    \begin{itemize}
        \item $y_2x_1 = 0$,
        \item $y_1y_2 > 0$ and $|y_2| < |y_1|$,
        \item $y_1y_2 <0$ and $\zeta \in \left(-\infty,\; \ln \eta/(b+1) \right] \cup \left[(b+1)/b,\; \infty \right)$.
    \end{itemize}
    The stabilizability radius of system \ref{sys4} is given by
    \begin{enumerate}
        \item $\tilde{\rho}= \min \{|\lambda_2|, |\lambda_4|\}$, if in Case B.
        \item $\tilde{\rho}= \min \{|\lambda_2|, |\lambda_5|\}$, if not in Case B and $y_1x_2 = 0$.
        \item If not in case B and $y_1x_2 \neq 0$, we have  
        \begin{equation}
            \tilde{\rho} = \min_l \left|\lambda_2 \frac{|\lambda_5|^l y_2 x_1 - |\lambda_4|^l y_1 x_2}{y_2x_1 - y_1 x_2}\right|^{1/(l+1)},
        \end{equation}
        where $l \in \{L, L+1\}$ for $L=\lfloor x_0 \rfloor$, and $x_0$ satisfies
        \begin{equation}
            L=\left\{\begin{aligned}
                &\frac{b\eta^{-x_0} +1}{x_0+1} \ln \left( \frac{\eta^{x_0} + b}{1+b} \zeta \right) = \ln \eta, &&\text{ if } y_1y_2 < 0 \text{ and } \eta^{1/(1+b)} < \zeta < (1+b)/b\\
                &x_0 = \frac{\ln b}{\ln \eta}, &&\text{ if } y_1y_2 >0 \text{ and } |y_2| >|y_1|.
            \end{aligned}\right.
        \end{equation}    
    \end{enumerate}
\end{theorem}
\begin{proof}
    Similarly to the proof of Theorem \ref{thm4}, we have the optimal sequence starting from $v_2$ is one of the following two types:
    \begin{enumerate}
        \item The sequence only contains $J_3$.
        \item  $(M'_1 J^l_3)^k$ for some $l \in \mathbb{N}$.
    \end{enumerate}
    Consider the type $2$ optimal sequence. We have $J^l_3v_2=(\lambda^l_4x_2,\lambda^l_5y_2)^T$ and the norm $\|M'_1J^l_3v_2\|$ is given by
    $$ \|M'_1J^l_3v_2\| = \left|\lambda_2 \|J^l_3v_2\| \frac{\sin \gamma(l)}{\sin \gamma}\right|,$$
    Using coordinate information we have
    $$ \gamma(l) = \arctan \frac{\lambda^l_5y_2}{\lambda^l_4x_2} - \arctan \frac{y_1}{x_1},\;\gamma =\arctan \frac{y_2}{x_2}-\arctan \frac{y_1}{x_1}, $$
    and
    $$\|J^l_3v_2\| =\sqrt{\lambda^{2l}_5y^2_2 + \lambda^{2l}_4x^2_2}.$$
    Then we have
    \begin{equation}\label{eq15}
    \tilde{\rho} =\inf_{l \in \mathbb{N}} \left|\lambda_2 \frac{|\lambda_5|^l y_2 x_1 - |\lambda_4|^l y_1 x_2}{y_2x_1 - y_1 x_2}\right|^{1/(l+1)} = |\lambda_4|\inf_{l \in \mathbb{N}} \left|\frac{\lambda_2}{\lambda_4} \frac{|\lambda_5/\lambda_4|^l y_2 x_1 -  y_1 x_2}{y_2x_1 - y_1 x_2}\right|^{1/(l+1)}.
    \end{equation}
    From \eqref{eq2} we have two obvious cases.
    \begin{itemize}
        \item If $y_2x_1=0$, when $|\lambda_2| \leq |\lambda_4|$, we have $\tilde{\rho} = |\lambda_2|$. Otherwise we have optimal sequence of type $1$, $\tilde{\rho} = |\lambda_4|$.
        \item If $y_1x_2=0$, when $|\lambda_2| \leq |\lambda_5|$, we have $\tilde{\rho} = |\lambda_2|$. Otherwise by letting $l \rightarrow \infty$, we have $\tilde{\rho} = |\lambda_5|$.
    \end{itemize}
        Now let $\eta = |\lambda_5/\lambda_4| \in (0,1)$, $b=|y_1x_2/x_1y_2|$, and $f(x)=\left|\lambda_2(\eta^x - b)/\lambda_4(1 - b)\right|^{1/(x+1)}$. Consider the following cases:
    \begin{itemize}
        \item If $y_1y_2 <0$, for $x>0$, we have
        \begin{equation}
            f(x) = \left(\frac{\eta^x + b }{ 1+ b }\left|\frac{\lambda_2}{\lambda_4}\right|\right)^{1/(x+1)}.
        \end{equation}
        Let $\zeta=|\lambda_2/\lambda_4|$, by lemma \ref{lm3}, we have following different cases of $\tilde{\rho}$.
        \begin{itemize}
            \item[i] When $\zeta \geq (b+1)/b$, we have $f'(x) <0$, $\tilde{\rho}=\min\{|\lambda_2|,|\lambda_4|\}$.
            \item[ii] When $\eta^{1/(b+1)} < \zeta < (b+1)/b$, for $x>0$, $f(x)$ has a minimum $x_0$ satisfying \eqref{eq17}. 
            \item[iii] When $\zeta \leq  \eta^{1/(b+1)}$, $f'(x)>0$. Then $\tilde{\rho}=\min\{|\lambda_2|,|\lambda_4|\}$.
        \end{itemize}
        Then in case ii, the infimum of $\tilde{\rho}$ is attained at $l=\lfloor x_0 \rfloor$ or $l=\lfloor x_0 \rfloor +1$.
        \item If $y_1y_2 > 0$ and $|y_2x_1| <|y_1x_2|$, we have
        \begin{equation}
            \left|\frac{\lambda_5}{\lambda_4}\right|^x |y_2x_1| <  |y_1x_2|, \; \forall x >0.
        \end{equation}
        Using the same notation defined in the previous case, we have $b >1$ for this case. For $x>0$ we have
        \begin{equation}
            f(x) = \left(\frac{b - \eta^x}{b - 1}\right)^{1/(x+1)},
        \end{equation}
        and
        \begin{equation}
            f'(x) = \frac{f(x)}{x+1} \left(-\frac{1}{x+1}\ln \frac{b -  \eta^x}{b - 1} -\frac{\ln \eta}{b\eta^{-x} - 1}\right) >0.
        \end{equation} 
        Then $f(x)$ is monotonically increasing. When $|\lambda_2| \leq |\lambda_4|$, we have $\tilde{\rho} = |\lambda_2|$. Otherwise, we have a type $1$ optimal sequence and $\tilde{\rho} = |\lambda_4|$.
        \item If $y_1y_2 >0$ and $|y_2x_1| >|y_1x_2|$, we have a zero of $f(x)$ at
        $$x'_0 = \frac{\ln (y_1x_2 / y_2x_1)}{\ln |\lambda_5 / \lambda_4|}.$$
        Then the infimum of $\tilde{\rho}$ is attained at $l=\lfloor x'_0 \rfloor$ or $l=\lfloor x'_0 \rfloor+1$.
    \end{itemize}
    Similar to the proof of Lemma \ref{lm1}, we can rewrite $|y_2x_1| < |y_1x_2|$ and $|y_2x_1| < |y_1x_2|$ as $|y_2| < |y_1|$ and $|y_2| > |y_1|$.
\end{proof}
\begin{remark}
    In Case B, System \ref{sys4} is not $\tilde{\rho}$-minimal. It's also worth mentioning that when $y_1x_2 = 0$, System \ref{sys4} could be $\tilde{\rho}$-minimal, see the remark below.
\end{remark}
\begin{remark}
    When $\tilde{\rho} = |\lambda_5|$, The optimal sequence is $M'_1J^\infty_3$. This is the only case when we have a infinitely long aperiodic optimal sequence for System \ref{sys4}.
\end{remark}
\begin{remark}
    The $\eta^{1/(1+b)} <\zeta$ is equivalent to $|\lambda_2| > (|\lambda_5||\lambda_4|^b)^{1/(1+b)}$.
\end{remark}

\begin{remark}(Finiteness property):
    
\rev{In this section, there is not a obvious parameter similar to the $\alpha$ in the elliptic case. The stabilizability radius depends on eigenvalues and eigenvectors. 
By Theorem \ref{thm4} and Theorem \ref{thm5}, System \ref{sys4} has the finiteness property except one case, when $M_2$ has the Jordan form $J_3$, not in Case B, $y_1x_2=0$ and $|\lambda_5| < |\lambda_2|$.}
\end{remark}

\section{More low dimensional examples}
It is helpful to consider a special case of System \ref{sys1} by letting $a=2$ and $b=c=d=0$, i.e the following example.
\begin{example}\label{ex7}
   Consider the switched system with the matrix set $\mathcal{M} = \{M_1, M_2\}$ such that
\begin{equation}\label{eq3}
    M_1 = \left(\begin{matrix}
        2 &0 \\
        0 &0
    \end{matrix}\right), \quad
    M_2 = \left(\begin{matrix}
        \cos \alpha \pi &\sin \alpha \pi\\
        -\sin \alpha \pi &\quad \cos \alpha \pi
    \end{matrix}\right).
\end{equation}
\end{example}
Applying Lemma \ref{lm6} with $\beta = 1/2$, we have
\begin{corollary}\label{col1}
    The stabilizability radius of Example \ref{ex7} is given by
    \begin{equation}
        \tilde{\rho} = \inf_{l \in \mathbb{N}} \left| 2 \cos l\alpha\pi \right|^{1/(l+1)}.
    \end{equation}
\end{corollary}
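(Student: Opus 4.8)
The plan is to specialize the formula of Lemma~\ref{lm6} to the matrix set of Example~\ref{ex7}; no new machinery is needed, only a substitution followed by a trigonometric identity. First I would record the spectral data of $M_1 = \operatorname{diag}(2,0)$: the nonzero eigenvalue is $\lambda_2 = 2$ with eigenvector $v_2 = (1,0)^T$, and the zero eigenvalue is $\lambda_1 = 0$ with eigenvector $v_1 = (0,1)^T$. Hence $\langle v_1, v_2\rangle = 0$, so $\beta\pi = \arccos|\langle v_1,v_2\rangle| = \pi/2$, i.e. $\beta = 1/2$, which is exactly the value used in the statement preceding the corollary.

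Next I would observe that $M_2$ is \emph{already} in the real Jordan form $J$ appearing in Lemma~\ref{lm6}, with modulus $|\lambda_3| = 1$ and rotation angle $\alpha\pi$; thus the conjugating matrix may be taken as $P = I$, so $M_1' = M_1$ and the symbol ``$\alpha$'' of Lemma~\ref{lm6} is literally the ``$\alpha$'' of Example~\ref{ex7}, and $|\lambda_2/\lambda_3| = 2$. The ``without loss of generality, $v_1$ is the $x$-axis'' normalization used inside the proof of Lemma~\ref{lm6} causes no trouble here, since conjugating by any planar rotation leaves $J$ unchanged and merely rotates $M_1'$, which does not affect $\tilde\rho$. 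Substituting $|\lambda_3| = 1$, $|\lambda_2/\lambda_3| = 2$, $\beta = 1/2$ and $\sin\beta\pi = 1$ into Lemma~\ref{lm6} yields
\begin{equation*}
    \tilde\rho = \inf_{l \in \mathbb{N}} \bigl| 2\sin\bigl((l\alpha - \tfrac12)\pi\bigr) \bigr|^{1/(l+1)}.
\end{equation*}
The final step is the elementary identity $\sin\bigl((l\alpha - \tfrac12)\pi\bigr) = -\cos(l\alpha\pi)$, so that $\bigl|\sin((l\alpha-\tfrac12)\pi)\bigr| = |\cos l\alpha\pi|$, which turns the last display into $\tilde\rho = \inf_{l\in\mathbb{N}} |2\cos l\alpha\pi|^{1/(l+1)}$, the asserted formula.

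There is no genuine obstacle in this argument: the corollary is a direct instantiation of Lemma~\ref{lm6}. The only two points that warrant an explicit sentence are the identification $\beta = 1/2$ (the kernel direction of $M_1$ being orthogonal to its nonzero eigenvector) and the remark that $M_2$ requires no change of basis, so the rotation angle entering the formula is precisely $\alpha\pi$ rather than some conjugated quantity.
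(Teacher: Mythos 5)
Your proposal is correct and is exactly the paper's route: the paper derives Corollary~\ref{col1} by substituting $\beta=1/2$, $|\lambda_3|=1$ and $\lambda_2=2$ into Lemma~\ref{lm6} (the text's citation of ``Lemma~\ref{lm5}'' is evidently a typo for Lemma~\ref{lm6}), with the identity $|\sin((l\alpha-\tfrac12)\pi)|=|\cos l\alpha\pi|$ giving the stated form. Your write-up merely spells out the spectral data and the fact that no change of basis is needed, which the paper leaves implicit.
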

\begin{figure}
    \centering
    \includegraphics[width=0.6\textwidth]{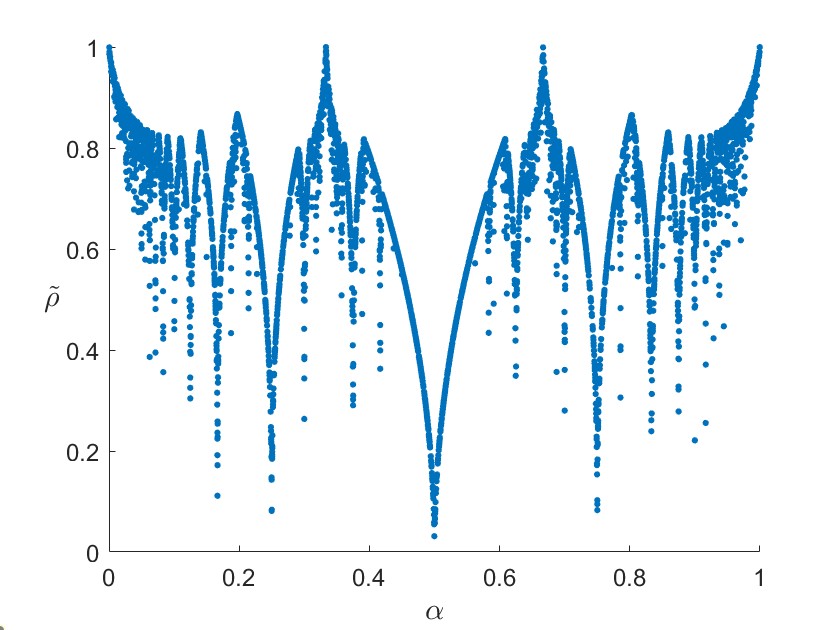}
    \caption{Stabilizability radius for $\alpha \in (0,1)$ \rev{of Example \ref{ex7}}}\label{fig2}
\end{figure}

One property of Example \ref{ex7} is that this system is pointwise stabilizable for most $\alpha$. 
Figure \ref{fig2} shows the $\tilde{\rho}$ in terms of $\alpha$ for Example \ref{ex7}. 
To generate this figure, choose $\alpha$ randomly sampled from $U(0,1)$ and calculate $\tilde{\rho}$ from Corollary \ref{col1}. If only rational $\alpha$ are chosen, all $\tilde{\rho}$ values would be zero.
\begin{proposition}\label{lm5}
    For any $\alpha \in (0,1)$, we have $\tilde{\rho} \leq 1$ and the equality only holds when $\alpha = 1/3 \text{ or } 2/3$.
\end{proposition}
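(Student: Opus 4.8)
The plan is to work entirely from the closed form $\tilde\rho = \inf_{l\in\mathbb{N}}|2\cos l\alpha\pi|^{1/(l+1)}$ supplied by Corollary \ref{col1} and translate the problem into a statement about the fractional parts $\{l\alpha\}$, where $\{x\}=x-\lfloor x\rfloor$. I would first record two reductions. Since $\cos(l(1-\alpha)\pi)=(-1)^{l}\cos(l\alpha\pi)$ for every integer $l$, the expression in Corollary \ref{col1} is invariant under $\alpha\mapsto 1-\alpha$, so it suffices to treat $\alpha\in(0,1/2]$ and the case $\alpha\in(1/2,1)$ — in particular $\alpha=2/3$ as the mirror of $\alpha=1/3$ — follows automatically. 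Second, for $l\ge 1$ one has $|2\cos l\alpha\pi|^{1/(l+1)}\le 1$ exactly when $\{l\alpha\}\in[1/3,2/3]$, and $<1$ exactly when $\{l\alpha\}\in(1/3,2/3)$, while $l=0$ always contributes $2$ and so never affects $\tilde\rho$. Thus $\tilde\rho\le 1$ is equivalent to: some $l\ge 1$ satisfies $\{l\alpha\}\in[1/3,2/3]$; and $\tilde\rho=1$ is equivalent to: $\{l\alpha\}\notin(1/3,2/3)$ for all $l\ge 1$.

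For the bound $\tilde\rho\le 1$ I would argue on $\alpha\in(0,1/2]$: if $\alpha\in[1/3,1/2]$ take $l=1$; if $\alpha\in(0,1/3)$, let $l_0$ be the least positive integer with $l_0\alpha\ge 1/3$, so that $(l_0-1)\alpha<1/3$ and hence $l_0\alpha<1/3+\alpha<2/3<1$, giving $\{l_0\alpha\}=l_0\alpha\in[1/3,2/3)$. The symmetry then gives $\tilde\rho\le 1$ for all $\alpha\in(0,1)$.

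For the equality I would suppose $\tilde\rho=1$. If $\alpha$ is irrational, then $\{l\alpha\}_{l\ge 1}$ is dense in $[0,1)$ (Weyl equidistribution), hence meets $(1/3,2/3)$ — contradiction; so $\alpha=p/q$ in lowest terms with $q\ge 2$. Since $\gcd(p,q)=1$, the set $\{\{l\alpha\}:l\ge 1\}$ equals $\{0,1/q,\dots,(q-1)/q\}$, so $\tilde\rho=1$ forces $(1/3,2/3)\cap\{m/q:1\le m\le q-1\}=\varnothing$. This fails for $q=2$ (the point $1/2$ lies in the interval) and for every $q\ge 4$ (an open interval of length $1/3>1/q$ must contain a point of $\frac1q\mathbb{Z}$ strictly between $0$ and $1$), leaving $q=3$, i.e.\ $\alpha\in\{1/3,2/3\}$. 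Conversely, for $\alpha=1/3$ (and hence $\alpha=2/3$) the values $\{l\alpha\}$, $l\ge 1$, cycle through $\{1/3,2/3,0\}$, none in $(1/3,2/3)$, so every term is $\ge 1$ and $\tilde\rho\ge 1$; combined with the bound this gives $\tilde\rho=1$.

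I expect the only genuinely delicate point to be the endpoint bookkeeping: the inequality $\tilde\rho\le 1$ needs only the closed interval $[1/3,2/3]$, whereas the equality characterization hinges on the open interval $(1/3,2/3)$ being free of the relevant rationals, and $q=3$ is precisely the threshold case where $m/q\in\{1/3,2/3\}$ sits on the boundary. The remaining ingredients — the minimality argument producing $l_0$, the equidistribution input for irrational $\alpha$, and the pigeonhole fact that an interval longer than $1/q$ contains a multiple of $1/q$ — are routine and I would not expect them to cause trouble.
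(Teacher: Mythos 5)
Your proof is correct and follows essentially the same route as the paper's: both reduce the question to whether the orbit $\{l\alpha\}$ enters the window around $1/2$ on which $|2\cos l\alpha\pi|\le 1$ (your condition $\{l\alpha\}\in[1/3,2/3]$ is exactly the paper's $\|l\alpha-1/2\|_N\le 1/6$), treating irrational $\alpha$ by density/equidistribution and rational $\alpha=p/q$ via the finite orbit $\{m/q\}$. You are in fact more careful than the paper on the equality case (you verify both directions, including that $\alpha=1/3$ really attains $\tilde\rho=1$); the only blemish is the stated \emph{equivalence} ``$\tilde\rho\le 1$ iff some $l$ has $\{l\alpha\}\in[1/3,2/3]$''---only the implication you actually use is needed, since $|2\cos l\alpha\pi|^{1/(l+1)}\le 2^{1/(l+1)}\to 1$ already forces $\tilde\rho\le 1$---but this does not affect the argument.
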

\begin{proof}
    The Diophantine approximation of Example \ref{ex7} is given by $R(\alpha) = \|l\alpha - 1/2\|_N $.
    For irrational $\alpha$, $R(\alpha)$ could be arbitrarily small, so there exists an $l_0$ such that $\|l_0\alpha - 1/2\|_N < 1/6$.

    For rational $\alpha$, it is known that the smallest nontrivial coprime integer of $2$ is $3$, so we have $R(\alpha) \leq R(1/3) = R(2/3) $, i.e. there exists $l$ such that
    \begin{equation}
        \|l\alpha - \frac{1}{2}\|_N \leq \|\frac{l}{3} - \frac{1}{2}\|_N = \frac{1}{6}, \; \forall \text{ rational } \alpha \in (0,1).
    \end{equation}
    Combining the two cases, we have $R(\alpha) \leq 1/6$ for any $\alpha \in (0,1)$.
    Then we have 
    \begin{equation}
        \tilde{\rho} \leq \left|2 \sin \frac{\pi}{6} \right| = 1. 
    \end{equation}
    The equality holds if and only if $R(\alpha) = 1/6$, i.e. $\alpha = 1/3$ or $2/3$.
\end{proof}
\rev{In this example, subset $S$ in Corollary \ref{col2} has Lebesgue measure close to $1$ if $c_0$ is close to $1$.}

We know that when $\alpha = 1/3$, Example \ref{ex7} is unstabilizable.
However, we can obtain a stabilizable system from Example \ref{ex7} by increasing the smallest singular value.
\begin{example}\label{ex4}
    Switched system with $\mathcal{M}=\{M_1, M_2\}$ similar to an example in \cite{stanford1979stability} such that
    \begin{equation}\label{eq6}
        M_1=\left(\begin{matrix}
            2 &0 \\
            0 &1/2    \end{matrix}\right), \quad
        M_2 = \left(\begin{matrix}
            \cos \pi/3 &\sin \pi/3\\
        -\sin \pi/3 &\quad \cos \pi/3
        \end{matrix}\right),
    \end{equation}
\end{example}
\begin{lemma}
    Example \ref{ex4} is pointwise stabilizable.
\end{lemma}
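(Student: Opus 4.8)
The plan is constructive: for every initial vector $x_0$ I will write down an explicit (``greedy'') switching law whose closed-loop trajectory decays geometrically at a rate $\lambda<1$ that does not depend on $x_0$; by Proposition~2.5 of~\cite{jungers2017feedback} this gives $\tilde\rho(\mathcal M)<1$, which is exactly pointwise stabilizability. The law is the obvious one suggested by the geometry: repeatedly apply powers of $M_2$ to bring the current direction as close as possible to the $x_2$-axis --- the contracting eigendirection of $M_1=\mathrm{diag}(2,\tfrac12)$ --- and then apply $M_1$ once.

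The first step is to pass to a one-dimensional map on angles. Writing a unit vector at (unsigned) angle $\delta$ from the $x_2$-axis as $(\sin\delta,\cos\delta)$, one has $M_1(\sin\delta,\cos\delta)=(2\sin\delta,\tfrac12\cos\delta)$, so $M_1$ multiplies the norm by $g(\delta):=\sqrt{\tfrac14+\tfrac{15}{4}\sin^2\delta}$ and sends the angle to $\arctan(4\tan\delta)$. Here $g(\delta)<1$ exactly for $|\delta|<\arctan(1/2)\approx 26.57^{\circ}$, and $g(\delta)\le g(30^{\circ})=\sqrt{19}/4<1.09$ for $|\delta|\le 30^{\circ}$. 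Since $M_2$ is rotation by $-\pi/3$ and $M_2^{6}=I$, its powers realize every rotation by a multiple of $60^{\circ}$ using at most five factors, and among the three distinct $60^{\circ}$-translates (mod $180^{\circ}$) of any direction one lies within $30^{\circ}$ of the $x_2$-axis. Hence the sequence $\delta_1,\delta_2,\dots\in(-30^{\circ},30^{\circ}]$ of angles at which $M_1$ is applied obeys $\delta_{j+1}=T(\delta_j)$, where $T(\delta)$ is $\arctan(4\tan\delta)$ reduced modulo $60^{\circ}$ to the interval $(-30^{\circ},30^{\circ}]$, and after $N$ blocks the norm equals $\|x_0\|\prod_{j=1}^{N}g(\delta_j)$.

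The crux is a two-step claim on the orbit of $T$: \emph{if $|\delta|>15^{\circ}$ then $|T(\delta)|\le 15^{\circ}$}. Indeed, for $\delta\in(15^{\circ},30^{\circ}]$ one has $\arctan(4\tan\delta)\in(\arctan(4\tan15^{\circ}),\arctan(4\tan30^{\circ})]\approx(47^{\circ},66.6^{\circ}]\subset(30^{\circ},90^{\circ})$, so $T(\delta)=\arctan(4\tan\delta)-60^{\circ}\in(-13^{\circ},6.6^{\circ}]\subset[-15^{\circ},15^{\circ}]$, and the case $\delta\in[-30^{\circ},-15^{\circ})$ is symmetric. Since $g(\delta)\le g(15^{\circ})<0.71$ for $|\delta|\le 15^{\circ}$, in any $N$ consecutive blocks at least $\lfloor N/2\rfloor$ of the factors $g(\delta_j)$ are $\le 0.71$ and the rest are $\le 1.09$, so $\prod_{j=1}^{N}g(\delta_j)\le C_1(0.71\cdot 1.09)^{N/2}<C_1(0.78)^{N/2}$ for an absolute constant $C_1$. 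Each block has at most six steps, and inside a block the norm is non-increasing except at its single $M_1$-step (factor $\le 1.09$); combining these, $\|x(t)\|\le C\,(0.78)^{t/12}\|x_0\|$ for an absolute constant $C$. As $(0.78)^{1/12}<1$, this proves $\tilde\rho(\mathcal M)\le(0.78)^{1/12}<1$, i.e.\ Example~\ref{ex4} is pointwise stabilizable.

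I expect the main obstacle to be precisely the two-step claim. The trouble is that $60^{\circ}$ rotations only guarantee landing within $30^{\circ}$ of the contracting axis, and $30^{\circ}$ is \emph{larger} than the contraction threshold $\arctan(1/2)\approx 26.57^{\circ}$ --- this is why the degenerate Example~\ref{ex7} with $\alpha=1/3$, in which $\mathrm{diag}(2,\tfrac12)$ is replaced by the singular $\mathrm{diag}(2,0)$ that freezes the post-$M_1$ direction at the $x_1$-axis, only attains the boundary value $\tilde\rho=1$. So a ``contract at every step'' argument cannot work, and one genuinely has to control the overshoot angle modulo $60^{\circ}$; once that behaves as claimed, the remaining estimates (block-length bounds, the parity bookkeeping in the product, and checking that $\lambda$ is uniform in $x_0$ while only $C$ absorbs the transient) are routine. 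The constants above are far from optimal --- one can observe that a ``large'' step is in fact immediately followed by a very small one --- but any $\lambda<1$ suffices for the statement.
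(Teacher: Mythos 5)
Your proof is correct, and it takes a genuinely different route from the paper's. The paper argues by finite enumeration: it lists five specific short words ($A_2A_1$, $A_2A_1A_1$, $A_2A_1A_2$, $A_2A_1A_2A_1$, $A_2A_1A_1A_2$) and asserts, with reference to a plot of the five norm functions over $\theta\in[0,\pi]$, that for every initial direction at least one of these words is strictly contracting; iterating then gives $\tilde\rho<1$. You instead analyze the induced one-dimensional angle dynamics of a greedy block law $M_1M_2^{k_j}$, and the key point --- that a block with $|\delta_j|\in(\arctan\tfrac12,30^\circ]$ may expand the norm by up to $\sqrt{19}/4$ but is necessarily followed by a block with $|\delta_{j+1}|\le 15^\circ$ contracting by at least $g(15^\circ)<0.71$, with $0.71\cdot 1.09<1$ --- is verified by explicit interval arithmetic on $\arctan(4\tan\delta)$ reduced mod $60^\circ$. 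I checked the computations ($g(\delta)=\sqrt{\tfrac14+\tfrac{15}{4}\sin^2\delta}$, the threshold $\arctan(1/2)$, the image $(\approx 47^\circ,\approx 66.6^\circ]$ of $(15^\circ,30^\circ]$ landing in $[-15^\circ,15^\circ]$ after subtracting $60^\circ$, and the block-length and parity bookkeeping) and they are all sound. What your approach buys is a fully self-contained proof with an explicit quantitative bound $\tilde\rho\le(0.78)^{1/12}$ and no reliance on a figure or an unstated case analysis; what the paper's approach buys is brevity and a concrete list of contracting words. Your closing remark correctly identifies why a one-step contraction argument must fail here and why the two-step estimate is the real content; this is consistent with the paper's surrounding discussion of the degenerate Example \ref{ex7} at $\alpha=1/3$.
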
 
\begin{proof}
    For any initial vector $x_0 = (\cos \theta, \sin \theta)^T$ with $\theta \in [0, \pi]$, we have one of following norm is less than $1$.
    \begin{itemize} 
        \item $\|A_2A_1x_0\|$.
        \item $\|A_2A_1A_1x_0\|$.
        \item $\|A_2A_1A_2x_0\|$.
        \item $\|A_2A_1A_2A_1x_0\|$.
        \item $\|A_2A_1A_1A_2x_0\|$.
    \end{itemize}
    Figure \ref{fig1} shows the range of $\theta$ such that each of the above norms is less than $1$.
    One can obtain the precise range of $\theta$ by solving the inequalities that the norm is less than $1$, which is not shown here.
\begin{figure}[htbp]
    \centering
    \includegraphics[width=0.6\textwidth]{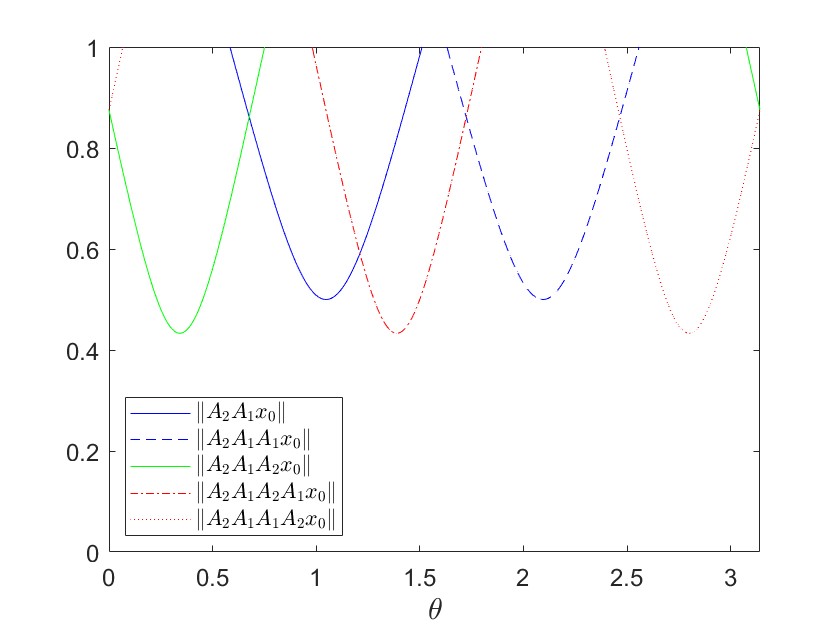}
    \caption{Norm of $x_0$ under various matrix combinations}
    \label{fig1}
    \end{figure}
\end{proof}
Then to make a system stabilizable, we don't necessarily need the smallest singular value as small as possible.

We see that the vector is projected to a lower dimensional subspace after applying a singular matrix. 
One general way of constructing a optimal sequence is to focus on the movement of the image of singular matrices.
For example, after applying $M_2$ once, Example \ref{ex7} only describes movement of the one dimensional image in $\mathbb{R}^2$.
For some singular matrices, we can minimize the dimension of the image first.  
\begin{example}\label{ex3}
system with $\mathcal{M}=\{M_1,M_2\}$ such that
\begin{equation}
    M_1=\left(\begin{matrix}
        1 &0 &0\\
        0 &0 &1\\
        0 &0 &0
    \end{matrix}\right), \quad
    M_2=\left(\begin{matrix}
        \cos\frac{\pi}{5} &0 &\sin\frac{\pi}{5}\\
        0&1 &0\\
        -\sin\frac{\pi}{5}&0 &\cos\frac{\pi}{5}\\
    \end{matrix}\right).
\end{equation}
\end{example}
In Example \ref{ex3}, the image of $M^2_1$ is a one dimensional subspace and $M^2_1$ has the same largest singular value as $M_1$. 
To find the optimal sequence, we can first apply $M_1$ twice to reduce the dimension of the image, then apply $M_2$ to rotate the vector in the image subspace.
Sometimes, matrix combinations can be used to reduce the dimension of the image.
\begin{example}\label{ex1}
    Switched system with $\mathcal{M}=\{M_1,M_2,M_3\}$ such that
\begin{equation}
    M_1=\left(\begin{matrix}
        1 &0 &0\\
        0 &1 &0\\
        0 &0 &0
    \end{matrix}\right), \quad
    M_2=\left(\begin{matrix}
        1 &0 &0\\
        0 &0&-1\\
        0 &1 &0
    \end{matrix}\right), \quad
    M_3=\left(\begin{matrix}
        \cos\frac{\pi}{5} &0 &\sin\frac{\pi}{5}\\
        0&1 &0\\
        -\sin\frac{\pi}{5}&0 &\cos\frac{\pi}{5}\\
    \end{matrix}\right).
\end{equation}
\end{example}
The image of the matrix combination $M_1M_2M_1$ in Example \ref{ex1} is a one dimensional subspace. To stabilize this system, we can apply $M_1M_2M_1$ to any initial vector first.
However, maximizing the dimension of kernel by some matrix sequence is not always the best strategy to find the optimal sequence.
\begin{example}\label{ex2}
    Switched system with $\mathcal{M}=\{M_1,M_2,M_3\}$,
\begin{equation}
    M_1=\left(\begin{matrix}
        2 &0 &0\\
        0 &0 &\frac{1}{2}\\
        0 &0 &0
    \end{matrix}\right), \quad
    M_2=\left(\begin{matrix}
        1 &0 &0\\
        0 &0&-1\\
        0 &1 &0
    \end{matrix}\right), \quad
    M_3=\left(\begin{matrix}
        \cos\frac{\pi}{5} &0 &\sin\frac{\pi}{5}\\
        0&1 &0\\
        -\sin\frac{\pi}{5}&0 &\cos\frac{\pi}{5}\\
    \end{matrix}\right).
\end{equation}
\end{example}
Using computer programs, it's easy to check that the optimal sequence does not start with maximizing the dimension of the kernel, i.e. $M^2_1$.
For example, when $t=10$, the matrix combination $M_3M_2M_1M_3M_2M_3M_1M^2_3M_2$ has smallest spectral radius.

\section{Conclusion}
In this paper, two different approaches to understand the stabilizability radius of linear discrete time switched systems with singular matrices are presented. 
First, we showed that in general, the stabilizability radius is lower bounded by the joint spectral subradius divided by the number of matrices.
\rev{We also show the stabilizability radius equals joint spectral subradius for switched system with rank one matrices.}

\rev{Then we focused on the two dimensional switched system with a singular matrix and an arbitrary matrix, in which we presented a method to compute the exact stabilizability radius. 
For two dimensional switched system with a singular matrix and a matrix with complex eigenvalues, the parameter set is a zero Hausdorff dimension set when the stabilizability radius equals a constant, such
as 0, are presented using this method.}

Although most works of linear discrete time switching systems in the literature focus on general results, we presented some examples to show systems with singular matrices are also interesting. 
The singular matrices function as projections in systems, which could sometimes simplify the optimal sequence of matrices. 
However, it could also lead to more complex behaviors. For instance, in Example \ref{ex7}, the set of $\alpha$ such that the stabilizability radius equals $0$ has zero Hausdorff dimension and is dense in $(0,1)$.
In Example \ref{ex2}, we need to consider more matrix combinations to find the optimal sequence.

There are still many open questions about the stabilizability radius of linear discrete time switching systems, such as finding a more precise lower bound of the stabilizability radius, or giving more situations when the stabilizability radius equals the joint spectral subradius.
In particular, for switched systems with singular matrices, could we find a general method to decide when the stabilizability radius equals zero? Can we say more about the finiteness property w.r.t. the stabilizability radius of System \ref{sys1} or other systems?

\section*{Acknowledgments} The authors are grateful to Alan Haynes for pointing them to Ref.~\cite{beresnevich2020sums}.
For the purpose of open access, the authors has applied a Creative Commons Attribution (CC BY) licence to any Author Accepted Manuscript version arising from this submission.

\bibliographystyle{plain}
\bibliography{refs}
\end{document}